\theoremstyle{plain}
\newtheorem{thm}{Theorem}[section]
\newtheorem{cor}[thm]{Corollary}
\newtheorem{prop}[thm]{Proposition}
\newtheorem{lem}[thm]{Lemma}
\newtheorem{fact}[thm]{Fact}
\theoremstyle{definition}
\newtheorem{prob}[thm]{Problem}
\newtheorem{ex}[thm]{Example}
\newtheorem{conv}[thm]{Convention}
\newtheorem*{acknow}{Acknowledgement}
\theoremstyle{remark}
\newtheorem{rem}[thm]{Remark}
\newcommand{\R}{\mathbb{R}}
\newcommand{\C}{\mathbb{C}}
\newcommand{\F}{\mathbb{F}}
\newcommand{\Ha}{\mathbb{H}}
\newcommand{\g}{\mathfrak{g}}
\newcommand{\h}{\mathfrak{h}}
\newcommand{\frakk}{\mathfrak{k}}
\newcommand{\gc}{{\g_\C}}
\newcommand{\hc}{{\h_\C}}
\newcommand{\gu}{{\g_U}}
\newcommand{\hu}{{\h_U}}
\newcommand{\kg}{\mathfrak{k_g}}
\newcommand{\kh}{\mathfrak{k_h}}
\newcommand{\tgu}{\mathfrak{t}_{\gu}}
\newcommand{\thu}{\mathfrak{t}_{\hu}}
\newcommand{\tkg}{\mathfrak{t}_{\kg}}
\newcommand{\tkh}{\mathfrak{t}_{\kh}}
\newcommand{\fraku}{\mathfrak{u}}
\newcommand{\su}{\mathfrak{su}}
\newcommand{\so}{\mathfrak{so}}
\newcommand{\A}{\mathcal{A}}
\newcommand{\T}{\mathcal{T}}
\newcommand{\bl}{\bullet}
\newcommand{\bs}{\backslash}
\newcommand{\simto}{\overset{\sim}\to}
\newcommand{\rest}{\operatorname{rest}}
\newcommand{\im}{\operatorname{im}}
\newcommand{\rank}{\operatorname{rank}}
\DeclareMathOperator*{\cross}{\times}
\title{A topological necessary condition for 
the existence of compact Clifford--Klein forms}
\author{Yosuke Morita}
\date{}
\begin{document}
\maketitle

\begin{abstract}
We provide a necessary condition for the existence of a compact 
Clifford--Klein form of a given homogeneous space of reductive type. 
The key to the proof is to combine a result of Kobayashi--Ono with 
an elementary fact that certain two different 
Clifford--Klein forms have the same cohomology ring. 
We give some examples, $SL(p + q, \R)/SO(p, q) \ (p, q: \text{odd})$ 
for instance, of homogeneous spaces which do not admit compact 
Clifford--Klein forms. 
\end{abstract}

\section{Introduction}

Let $G$ be a Lie group and $H$ its closed subgroup. 
If a discrete subgroup $\Gamma$ of $G$ 
acts properly discontinuously and freely on $G/H$, 
the double coset space $\Gamma \bs G/H$ becomes a manifold 
locally modelled on $G/H$. 
The space $\Gamma \bs G/H$ is then called 
a Clifford--Klein form of $G/H$. 

In this paper, we study the following problem: 

\begin{prob}\label{prob:cptCK} (\cite{Kob89})
When does $G/H$ admit a compact Clifford--Klein form?
\end{prob}

Using the results of \cite{BHC} \cite{MosTam}, 
A. Borel \cite{Bor63} proved that when $G$ is linear reductive 
and $H$ is compact, $G/H$ always admits a compact Clifford--Klein form.
In contrast, if $G$ is a linear reductive Lie group
and $H$ is a non-compact closed reductive subgroup of $G$, 
$G/H$ does not necessarily admit a compact Clifford--Klein form. 
A systematic study of Problem~\ref{prob:cptCK} in this case
was initiated by Kobayashi \cite{Kob89}. 
Since then, various methods derived from diverse fields in mathematics 
have been applied to this problem (\cite{Ben} \cite{Kob92} \cite{KO} \cite{Mar} \cite{Sha} 
\cite{Zim} for instance). 
Methods and results on this topic are surveyed in 
Kobayashi \cite{Kob96-2} \cite{Kob05}, Kobayashi--Yoshino \cite{KY}, 
Labourie \cite{Lab} and Constantine \cite{Con}. 

\begin{ex}
If $(G,H) = (O(p,q+1), O(p,q))$ with $q \neq 1$, 
a compact Clifford--Klein form of $G/H$ is nothing but 
a compact complete pseudo-Riemannian manifold of signature $(p,q)$ 
with constant negative sectional curvature. 
When $q=0$ (Riemannian case), 
$G/H$ admits a compact Clifford--Klein form since $H$ is compact. 
In contrast, when $p$ and $q$ are odd, $G/H$ does not admit 
a compact Clifford--Klein form (see Kulkarni \cite{Kul} or 
Kobayashi--Ono \cite{KO}). 
Corollary~\ref{cor:nonsym} (4) 
combined with Fact~\ref{fact:connected} 
includes this result as a special case.
For more information on this example, see Kobayashi--Yoshino \cite{KY}.
\end{ex}

Extending the idea of Kobayashi--Ono \cite{KO} that 
$H^\bl(\Gamma \bs G/H)$ is ``larger than or equal to'' 
$H^\bl(G_U/H_U)$ 
if $\Gamma \bs G/H$ is a compact Clifford--Klein form, 
we obtain a topological obstruction for 
the existence of compact Clifford--Klein forms:

\begin{thm}\label{thm:main} 
{\rm (see Convention~\ref{conv:red} for notation and terminology)}
Let $G/H$ be a homogeneous space of reductive type, 
$G_U/H_U$ the compact homogeneous space associated to $G/H$ 
and $K_H$ the maximal compact subgroup of $H$. 
If the homomorphism
\[
\pi^\ast : H^\bl(G_U/H_U; \C) \to H^\bl(G_U/K_H; \C)
\]
induced by the projection $\pi : G_U/K_H \to G_U/H_U$ is not injective, 
then $G/H$ does not admit a compact Clifford--Klein form. 
\end{thm}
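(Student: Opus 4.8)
The overall strategy is to compare the cohomology of two different Clifford–Klein forms. Suppose, for contradiction, that $\Gamma \bs G/H$ is a compact Clifford–Klein form. The plan is as follows.

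First, I would replace $H$ by its maximal compact subgroup $K_H$ at the level of Clifford–Klein forms. The point is that since $H/K_H$ is diffeomorphic to a Euclidean space, the fibration $\Gamma \bs G/K_H \to \Gamma \bs G/H$ has contractible fibers, so the projection is a homotopy equivalence and $H^\bl(\Gamma \bs G/K_H; \C) \cong H^\bl(\Gamma \bs G/H; \C)$. Moreover $\Gamma \bs G/K_H$ is again a Clifford–Klein form, namely of the homogeneous space $G/K_H$; it is not compact, but its cohomology is finite-dimensional since it is homotopy equivalent to the compact manifold $\Gamma \bs G/H$. This is the "elementary fact" alluded to in the abstract: the two Clifford–Klein forms $\Gamma \bs G/H$ and $\Gamma \bs G/K_H$ have isomorphic cohomology rings.

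Second, I would invoke the result of Kobayashi–Ono (\cite{KO}) — presumably recalled as a Fact earlier in the paper — which says that for a compact Clifford–Klein form, the natural map from the cohomology of the compact dual is injective; more precisely, applied to $G/K_H$ with compact dual $G_U/K_H$ (note $K_H$ is already compact, so it coincides with its own "$U$-version"), one gets an injection $H^\bl(G_U/K_H; \C) \hookrightarrow H^\bl(\Gamma \bs G/K_H; \C)$. Here I should be careful that the Kobayashi–Ono argument, which typically uses that $\Gamma \bs G/K_H$ carries a form of the right type via the Matsushima–Murakami / relative Lie algebra cohomology comparison, is still valid when the Clifford–Klein form is noncompact but has finite-dimensional cohomology; one needs a finiteness hypothesis to run the Poincaré-duality / pairing argument, and that is exactly what the first step provides.

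Third, I would assemble the diagram. The projection $\pi : G_U/K_H \to G_U/H_U$ fits into a commutative square together with $\Gamma \bs G/K_H \to \Gamma \bs G/H$, so on cohomology we get a commutative square in which the bottom map $H^\bl(\Gamma \bs G/H;\C) \to H^\bl(\Gamma \bs G/K_H;\C)$ is an isomorphism, the right vertical map $H^\bl(G_U/K_H;\C) \to H^\bl(\Gamma \bs G/K_H;\C)$ is injective by Kobayashi–Ono, and the left vertical map is the Kobayashi–Ono injection $H^\bl(G_U/H_U;\C)\hookrightarrow H^\bl(\Gamma\bs G/H;\C)$. Chasing this square forces $\pi^\ast : H^\bl(G_U/H_U;\C)\to H^\bl(G_U/K_H;\C)$ to be injective, contradicting the hypothesis. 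I expect the main obstacle to be the second step: verifying that the Kobayashi–Ono injectivity statement genuinely applies to the noncompact form $\Gamma\bs G/K_H$ — i.e., tracking exactly which finiteness/duality input their proof needs and confirming the homotopy equivalence supplies it — rather than any of the formal diagram-chasing, which is routine.
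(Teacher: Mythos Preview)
Your approach is the same as the paper's, and the diagram chase you describe is exactly the one the paper carries out. However, you have manufactured a phantom obstacle for yourself. In your square, once you know that the ``left vertical'' map $\eta : H^\bl(G_U/H_U;\C)\hookrightarrow H^\bl(\Gamma\bs G/H;\C)$ is injective (Kobayashi--Ono applied to the \emph{compact} form $\Gamma\bs G/H$) and that the ``bottom'' map $H^\bl(\Gamma\bs G/H;\C)\to H^\bl(\Gamma\bs G/K_H;\C)$ is an isomorphism, the composite along the bottom-left is injective, hence so is the composite along the top-right, hence so is the top map $\pi^\ast$. Nothing about the right vertical map is used. In other words, you never need Kobayashi--Ono for the noncompact quotient $\Gamma\bs G/K_H$; the only place the Kobayashi--Ono injectivity is invoked is for the genuinely compact Clifford--Klein form $\Gamma\bs G/H$, where it applies directly without any finiteness caveats. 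The paper's proof is precisely this: it writes the commuting square with $\eta$'s horizontal and $\pi^\ast$'s vertical, notes that the top $\eta$ is injective by compactness and the right $\pi^\ast$ is an isomorphism by the contractible-fibre argument, and concludes. So your ``second step'' and the worry you flag at the end can simply be deleted.
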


The key to the proof of Theorem~\ref{thm:main} is to combine 
the above idea of Kobayashi--Ono with an elementary fact that 
the cohomology rings of two different Clifford--Klein forms 
$H^\bl(\Gamma \bs G/H; \C)$ and $H^\bl(\Gamma \bs G/K_H; \C)$ 
are isomorphic to each other (see Proposition~\ref{prop:isom}). 

As an application of Theorem~\ref{thm:main}, 
we obtain some examples of symmetric spaces $G/H$ 
which do not admit compact Clifford--Klein forms: 

\begin{cor}\label{cor:list}
A symmetric space $G/H$ 
does not admit a compact Clifford--Klein form 
if $(G,H)$ is one of the following: 

\begin{enumerate}[(1)]
\item $(GL(2n,\R), GL(n,\C)) \quad (n>1)$ 
\item $(SL(p+q,\R), SO(p,q)) \quad (p,q : \text{odd})$
\item $(O(n,n), O(n,\C)) \quad (n>1)$ 
\item $(O(p+r, q+s), O(p,q) \times O(r,s)) 
\quad (p,q : \text{odd, } r>0)$
\end{enumerate}
\end{cor}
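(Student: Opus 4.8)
The plan is to apply Theorem~\ref{thm:main}: for each of the four pairs $(G,H)$ it is enough to exhibit a nonzero class in $H^\bl(G_U/H_U;\C)$ that is killed by $\pi^\ast$. In every case $G_U/H_U$ is a Grassmann-type compact symmetric space carrying a ``tautological'' vector bundle $\gamma$ with orthogonal (resp.\ unitary) complement $\gamma^\perp$, and the mechanism is always the same: pulling $\gamma$ back along $\pi$ shrinks the structure group, so that $\pi^\ast\gamma$ either splits off a summand of odd rank or becomes isomorphic to its conjugate/dual; this forces a Pontryagin or Euler class of $\gamma$ to vanish in $H^\bl(G_U/K_H;\C)$ although it is nonzero in $H^\bl(G_U/H_U;\C)$.

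First I would run the recipe of Convention~\ref{conv:red} in each case. In (1), $G_U=U(2n)$, $H_U=U(n)\times U(n)$, so $G_U/H_U$ is a complex Grassmannian; since $H=GL(n,\C)$ is a complex group it embeds into its complexification $GL(n,\C)\times GL(n,\C)$ by $h\mapsto(h,\bar h)$, whence $K_H=U(n)$ sits in $H_U$ by $h\mapsto(h,\bar h)$, a \emph{twisted} diagonal. In (3), $G_U=O(2n)$, $H_U=O(n)\times O(n)$, $G_U/H_U$ is a real Grassmannian, but now $K_H=O(n)$ enters $H_U$ \emph{diagonally}, real matrices being conjugation-invariant. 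In (2), $G_U=SU(p+q)$, $H_U=SO(p+q)$, $G_U/H_U=SU(p+q)/SO(p+q)$ is of type $\mathrm{AI}$, and $K_H=SO(p)\times SO(q)$ sits block-diagonally in $SO(p+q)$. In (4), $G_U=O(p+q+r+s)$, $H_U=O(p+q)\times O(r+s)$, $G_U/H_U$ is a real Grassmannian and $K_H=(O(p)\times O(q))\times(O(r)\times O(s))$ block-diagonally; here I would first invoke Fact~\ref{fact:connected} to pass to the identity components, so that the Grassmannian becomes the \emph{oriented} one $\widetilde{\mathrm{Gr}}_{p+q}(\R^{p+q+r+s})$ and $\gamma$ becomes orientable (in (2) one may likewise pass to connected groups).

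Now the obstruction class. In (3), $\pi^\ast\gamma$ and $\pi^\ast\gamma^\perp$ are both the bundle of the standard representation of the diagonal $O(n)$, hence isomorphic, so $p(\pi^\ast\gamma)^2=p(\pi^\ast\gamma\oplus\pi^\ast\gamma^\perp)=p(\pi^\ast(\gamma\oplus\gamma^\perp))=p(\underline{\R^{2n}})=1$ with $\C$ coefficients; thus $\pi^\ast p_1(\gamma)=0$, while $p_1(\gamma)\neq0$ in $H^4(G_U/H_U;\C)$ as soon as $n>1$ (as one sees by restricting to a sub-Grassmannian; for $n=1$ the space is a low-dimensional sphere and $H^4$ vanishes). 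Case (1) is the complex analogue: $\pi^\ast\gamma=:E$ is the standard bundle and $\pi^\ast\gamma^\perp\cong\overline E$, so $(\pi^\ast\gamma)_\R\otimes_\R\C\cong E\oplus\overline E\cong\pi^\ast(\gamma\oplus\gamma^\perp)$ is trivial and every Pontryagin class of the realification $\gamma_\R$ is killed by $\pi^\ast$; but $p_1(\gamma_\R)=c_1(\gamma)^2-2c_2(\gamma)\neq0$ in $H^4(G_U/H_U;\C)$ precisely when $n>1$. In (2) and (4), $\pi^\ast\gamma$ splits as $\alpha_p\oplus\alpha_q$ (together with further summands in case (4)) with $\alpha_p$ orientable of odd rank $p$; since $p,q$ are odd, $\gamma$ has even rank $p+q$, so $e(\gamma)$ is defined and $\pi^\ast e(\gamma)=e(\alpha_p)\,e(\alpha_q)=0$, because the Euler class of an odd-rank orientable real bundle is $2$-torsion, hence zero over $\C$. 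Finally $e(\gamma)\neq0$: in (4) it is the Euler class of the tautological bundle of $\widetilde{\mathrm{Gr}}_{p+q}(\R^{p+q+r+s})$, nonzero because $r>0$ forces $p+q<p+q+r+s$ (for $r+s=1$ this reproduces the Kobayashi--Ono--Kulkarni pseudo-sphere computation); in (2) it is the classical fact that for $m$ even the ring $H^\bl(SU(m)/SO(m);\C)$ has a nonzero degree-$m$ generator, realized by $e(\gamma)$. Theorem~\ref{thm:main} then gives the conclusion in each case.

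I expect the most delicate step to be the bookkeeping of Convention~\ref{conv:red}: identifying the embeddings $K_H\hookrightarrow H_U\hookrightarrow G_U$ up to conjugacy, and in particular noticing the twist in (1) (that $K_H$ enters $U(n)\times U(n)$ by $h\mapsto(h,\bar h)$, which is exactly what produces $\pi^\ast\gamma^\perp\cong\overline{\pi^\ast\gamma}$ rather than $\pi^\ast\gamma^\perp\cong\pi^\ast\gamma$), and tracking orientability in (4), which forces the passage to connected groups via Fact~\ref{fact:connected}. The other genuine input is the nonvanishing of the chosen characteristic class on $G_U/H_U$: immediate for Grassmannians from their classical cohomology rings, but for $SU(m)/SO(m)$ in case (2) one must quote the known ring structure of its rational cohomology. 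The hypotheses ``$n>1$'', ``$p,q$ odd'' and ``$r>0$'' are used exactly where the obstruction class must survive in $H^\bl(G_U/H_U;\C)$, and (for the oddness) where $e(\alpha_p)$ must vanish.
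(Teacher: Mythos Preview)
Your proposal is correct and follows essentially the same approach as the paper: both produce a Chern--Weil characteristic class on $G_U/H_U$ that dies under $\pi^\ast$, the paper doing this algebraically via Proposition~\ref{prop:noninj} with the invariant polynomials $c_2\otimes 1-1\otimes c_2$, $e$, $p_1\otimes 1-1\otimes p_1$, $e\otimes 1$ in cases (1)--(4), while you phrase the same thing geometrically as $p_1(\gamma_\R)$, $e(\gamma)$, $p_1(\gamma)$, $e(\gamma)$ for the tautological bundle. The only substantive difference is in case~(1), where your class $p_1(\gamma_\R)=c_1(\gamma)^2-2c_2(\gamma)$ is not the image of the paper's $c_2\otimes 1-1\otimes c_2$ but an equally valid obstruction.
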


\begin{rem}
We mention some related results that were previously obtained by using different methods: 
\begin{itemize}
\item (1) is new to the best of the author's knowledge. 
\item Concerning (2), Kobayashi \cite{Kob92} proved that 
$SL(2p, \R) / SO(p, p)$ ($p > 0$) does not admit a compact Clifford--Klein form. 
Benoist \cite{Ben} gave alternative proof of this result, 
and also proved that $SL(2p+1, \R) / SO(p, p+1)$ ($p > 0$) does not admit a compact Clifford--Klein form. 
\item Concerning (3), Kobayashi \cite{Kob92} proved that 
$SO(n, n) / SO(n, \C)$ ($n : \text{even}$) does not admit a compact Clifford--Klein form. 
For odd $n$, (3) is new to the best of the author's knowledge. 
\item Concerning (4), Kobayashi \cite{Kob92} proved that 
$O(p+r, q+s) / (O(p,q) \times O(r,s))$ does not admit a compact Clifford--Klein form unless
$\min \{ p,q,r,s \} = 0$. We assume $s=0$ without loss of generality. Then, furthermore, 
$O(p+r,q) / (O(p,q) \times O(r))$ does not admit 
a compact Clifford--Klein form if $p+r > q$ (Kobayashi \cite{Kob92}), 
$(p,q,r) = (2n, 2n+1, 1)$ (Benoist \cite{Ben}), or
$p,q,r$ are all odd (Kobayashi--Ono \cite{KO}). On the other hand, it admits a compact Clifford--Klein form if 
$(p,q,r) = (1,2n,1)$, $(3,4n,1)$, $(7,8,1)$, $(1,4,3)$, $(1,4,2)$ (Kulkarni \cite{Kul}, Kobayashi \cite{Kob92} \cite{Kob96-2}).
\end{itemize}
\end{rem}

We can also apply our method to non-symmetric homogeneous spaces. 
For instance: 

\begin{cor}\label{cor:nonsym}
A homogeneous space $G/H$ 
does not admit a compact Clifford--Klein form 
if $(G,H)$ is one of the following: 
\begin{enumerate}[(1)]
\item $(SL(n_1+ \dots + n_k, \R), 
SL(n_1,\R) \times \dots \times SL(n_k,\R)) \quad (n_1, n_2 > 2)$ 
\item $(SL(n_1+ \dots + n_k, \C), 
SL(n_1,\C) \times \dots \times SL(n_k,\C)) \quad (n_1, n_2 > 1)$ 
\item $(SL(n_1+ \dots + n_k, \Ha), 
SL(n_1,\Ha) \times \dots \times SL(n_k,\Ha)) \quad (n_1, n_2 > 1)$ 
\item $(O(p_1+ \dots +p_k, q_1+ \dots +p_k), 
O(p_1,q_1) \times \dots \times O(p_k,q_k)) 
\quad (p_1, q_1 : \text{odd, } p_2>0)$ 
\end{enumerate}
\end{cor}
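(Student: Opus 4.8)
The plan is to apply Theorem~\ref{thm:main} to each of the four families, so that everything reduces to showing that
\[
\pi^\ast\colon H^\bl(G_U/H_U;\C)\longrightarrow H^\bl(G_U/K_H;\C)
\]
is \emph{not} injective. The first step is to identify the data. In every case $G_U$ and $H_U$ are products of compact classical groups obtained by passing to compact real forms of the relevant complexifications, and $K_H\subset H_U$ is a product of maximal compact subgroups. Writing $N=\sum_i n_i$, $P=\sum_i p_i$, $Q=\sum_i q_i$, and relabelling the blocks so that $n_1,n_2$ (resp.\ $p_1,q_1$) are the distinguished ones,
\[
(G_U,H_U,K_H)=
\begin{cases}
\bigl(\,SU(N),\ \prod_i SU(n_i),\ \prod_i SO(n_i)\,\bigr) & \text{in (1)},\\
\bigl(\,SU(N)\times SU(N),\ \prod_i\bigl(SU(n_i)\times SU(n_i)\bigr),\ \prod_i SU(n_i)\,\bigr) & \text{in (2)},\\
\bigl(\,SU(2N),\ \prod_i SU(2n_i),\ \prod_i Sp(n_i)\,\bigr) & \text{in (3)},\\
\bigl(\,O(P+Q),\ \prod_i O(p_i+q_i),\ \prod_i\bigl(O(p_i)\times O(q_i)\bigr)\,\bigr) & \text{in (4)},
\end{cases}
\]
all embeddings block-diagonal (and in (2) each $SU(n_i)$ diagonal in $SU(n_i)\times SU(n_i)$; the disconnectedness of the orthogonal groups in (4) is harmless over $\C$, and is handled by Fact~\ref{fact:connected} where necessary).

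The mechanism for non-injectivity is the commuting square
\[
\begin{array}{ccc}
H^\bl(BH_U;\C) & \xrightarrow{\ \iota^\ast\ } & H^\bl(BK_H;\C)\\
\downarrow & & \downarrow\\
H^\bl(G_U/H_U;\C) & \xrightarrow{\ \pi^\ast\ } & H^\bl(G_U/K_H;\C)
\end{array}
\]
where $\iota\colon K_H\hookrightarrow H_U$ and the vertical maps are the characteristic homomorphisms $\rho_H,\rho_K$ (pullbacks along the classifying maps of the principal bundles $G_U\to G_U/H_U$ and $G_U\to G_U/K_H$); it commutes because it is the restriction to fibres of the map of fibrations $BK_H\to BH_U$ over $BG_U$. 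By the Cartan-model description of the cohomology of compact homogeneous spaces, which applies to all the pairs occurring here, the kernel of $\rho_L\colon H^\bl(BL;\C)\to H^\bl(G_U/L;\C)$ is the ideal generated by $\im\bigl(H^{>0}(BG_U;\C)\bigr)$, for $L=H_U$ and $L=K_H$. Hence it suffices to exhibit $\alpha\in H^\bl(BH_U;\C)$ with $\iota^\ast\alpha=0$ but $\alpha\notin\bigl(\im H^{>0}(BG_U;\C)\bigr)$: then $\bar\alpha:=\rho_H(\alpha)$ is a nonzero element of $H^\bl(G_U/H_U;\C)$, while $\pi^\ast\bar\alpha=\rho_K(\iota^\ast\alpha)=0$. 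For $\alpha$ I would take a low-degree characteristic class of the first tautological block that is forced to die under $\iota^\ast$: in cases (1)--(3) the third Chern class $c_3(E_1)$ of the first tautological bundle $E_1$ (over $BK_H$ the bundle $E_1$ acquires a real, a complex-conjugation, or a quaternionic structure, so its odd Chern classes vanish over $\C$); in case (4) the top Pontryagin class of the first tautological bundle, of rank $p_1+q_1$ (since $p_1,q_1$ are odd, over $BK_H$ this bundle splits as a Whitney sum of two \emph{odd}-rank real bundles, whose total Pontryagin class has degree strictly below $p_1+q_1$, so cannot contain it).

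It then remains to check $\alpha\notin\bigl(\im H^{>0}(BG_U;\C)\bigr)$, i.e.\ $\bar\alpha\neq0$. In cases (1)--(3) this is short: since $H^2(BH_U;\C)=0$, in the degree of $\alpha$ the ideal $\bigl(\im H^{>0}(BG_U;\C)\bigr)$ is spanned by the images of just one or two Chern classes of the full tautological bundle(s) $\bigoplus_i E_i$, and the inequalities $n_1,n_2>2$ in (1) and $n_1,n_2>1$ in (2), (3) are exactly what forces $c_3(E_1)$ (resp.\ the difference of the second Chern classes of the two $n_1$-bundles in (2)) not to be proportional to that image — one needs at least two blocks large enough for the relevant class to be defined and linearly independent of its translates. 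I expect the genuine obstacle to be case (4): there one must compute in $H^\bl\!\bigl(O(P{+}Q)/\prod_i O(p_i{+}q_i);\C\bigr)$, the $\C$-cohomology of a real flag manifold (subtler than its complex analogue), and show that the top Pontryagin class of the first block survives there; the parity of $p_1,q_1$ and the hypothesis $p_2>0$ are what make this go through, although for a few degenerate choices of the remaining block sizes a supplementary argument may be needed — for instance an appeal to the Calabi--Markus phenomenon, which rules out compact Clifford--Klein forms when $G/H$ admits no proper infinite discrete group acting properly. Once the candidate classes are pinned down, the remaining Chern-class bookkeeping in cases (1)--(3) is routine.
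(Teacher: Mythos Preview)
Your mechanism is exactly the paper's Proposition~\ref{prop:noninj}: the commuting square with classifying spaces is the Chern--Weil square, and ``$\alpha\notin(\im H^{>0}(BG_U))$'' is the condition $\alpha\notin J_{G_U/H_U}$. So the overall strategy matches. The paper, however, does not attack general $k$ directly. It proves a two-line reduction (Proposition~\ref{prop:enlarge}): if $(G,H)$ satisfies the hypothesis of Proposition~\ref{prop:noninj}, then so does $(\widetilde G,H)$ for any larger $\widetilde G$, and so does $(G,H\times H')$ for any commuting $H'$. This immediately reduces each family to the case $k=2$, where the verification is a one-line check with $c_3\otimes 1$ (cases (1)--(3)) or $e\otimes 1$ (case (4)). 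You are doing more work by carrying all $k$ blocks throughout.

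Two points in your write-up need repair. First, your treatment of case (2) is muddled: the tautological bundle $E_1$ over $BK_H$ does \emph{not} acquire a structure killing odd Chern classes --- over the diagonal $SU(n_1)$ it is simply the tautological $SU(n_1)$-bundle, whose $c_3$ is perfectly alive. What actually happens is that the two bundles $E_1^{\pm}$ coming from the two factors of $SU(n_1)\times SU(n_1)\subset H_U$ become isomorphic over $BK_H$; the correct test class is $c_2(E_1^{+})-c_2(E_1^{-})$ (you mention this in passing, but it is inconsistent with your earlier blanket claim about $c_3(E_1)$).

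Second, your hedge in case (4) is unwarranted, and invoking Calabi--Markus would not rescue the degenerate cases anyway. The verification that $e(E_1)$ (or your $p_{(p_1+q_1)/2}(E_1)=e(E_1)^2$) lies outside $J_{G_U/H_U}$ is short for all $k$: send the generator $e_1$ of the first tensor factor of $(S(\h_U)^\ast)^{H_U}$ to itself and every other generator (all Pontryagin classes, and any $e_i$ with $i\geq 2$) to zero. Every restricted Pontryagin class of $G_U$ dies, and the restricted Euler class of $G_U$ (if it exists) equals $\prod_i e_i$, which dies because $p_2>0$ forces a second factor. Hence $J_{G_U/H_U}$ maps to zero while $e(E_1)$ (and its square) do not. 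No case analysis on the remaining block sizes is needed; this is precisely what the paper's reduction via Proposition~\ref{prop:enlarge} encodes.
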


\begin{rem}
Corollary~\ref{cor:list} (4) is 
a special case of Corollary~\ref{cor:nonsym} (4). 
\end{rem}

\begin{rem}
\begin{itemize}
\item The existence problem of compact Clifford--Klein forms of $SL(n,\F)/SL(m,\F)$ ($n>m$, $\F = \R, \C, \Ha$) 
has been attracted considerable attentions. 
The first result was obtained in \cite{Kob92-2} in the setting $n=3, m=2, \F=\C$. 
Some further works are in \cite{Ben} \cite{Kob96-2} \cite{LMZ} \cite{LabZim} \cite{Sha} \cite{Zim}. 
For example, expanding the method of \cite{Zim} \cite{LMZ}, Labourie--Zimmer \cite{LabZim} proved that 
$SL(n, \R)/SL(m, \R)$ does not admit a compact Clifford--Klein form if $n-m > 2$. 
Unfortunately, Theorem~\ref{thm:main} gives no information about this case. 
\item Benoist \cite{Ben} proved that $SL(p+q, \R)/ (SL(p, \R) \times SL(q, \R))$ ($p,q > 0$) 
does not admit a compact Clifford--Klein form if $pq$ is even. 
\item By applying the method of \cite{Kob92}, Kobayashi \cite{Kob96-2} gave many results that are similar to Corollary~\ref{cor:nonsym}. See \cite[Example 4.13.5, Example 4.13.6, Example 4.13.7]{Kob96-2}. 
\end{itemize}
\end{rem}

\begin{rem}
We can also prove that 
$O(n_1+ \dots + n_k, \C) / 
(O(n_1,\C) \times \dots \times O(n_k,\C)) \ 
(n_1, n_2 > 1 \text{ or } n_1 : \text{even, } n_2=1)$ and
$Sp(n_1+ \dots + n_k, \C) / 
(Sp(n_1,\C) \times \dots \times Sp(n_k,\C)) \ (n_1, n_2 > 0)$ 
do not admit compact Clifford--Klein forms. 
However, these examples are not new. 
We can apply the method of \cite{Kob92} to these cases. 
\end{rem}

Kobayashi--Ono has already deduced a necessary condition 
for the existence of Clifford--Klein forms (\cite[Corollary 5]{KO}). 
Later, Kobayashi gave a generalization of this result (\cite[Proposition 4.10]{Kob89}). 
These methods hilighted the Euler class of tangent bundle. 
A feature of Theorem~\ref{thm:main} is that it includes information 
not only on the Euler class of tangent bundles
but also on other cohomology classes; 
to obtain the above examples, we use characteristic classes 
that are different from the Euler class of tangent bundles.
We give a proof of \cite[Proposition 4.10]{Kob89} 
in the spirit of \cite{KO} by using Theorem~\ref{thm:main} (see Corollary~\ref{cor:KO}). 

\section[Preliminaries and proof of Theorem 1.3]{Preliminaries and proof of Theorem~\ref{thm:main}}

We work in the following setting unless otherwise specified: 

\begin{conv}\label{conv:red}
$G$ is a linear reductive Lie group and 
$H$ is a closed connected subgroup of $G$ which is reductive in $G$. 
Without loss of generality, we shall realize $G$ and its subgroup $H$ 
as closed subgroups of $GL(N,\R)$ that are stable under transposition. 
$G_\C$ and $H_\C$ are connected Lie subgroups of $GL(N, \C)$ 
with Lie algebras $\gc = \g \otimes \C$ and $\hc = \h \otimes \C$, respectively. 
We assume that $G_\C$ and $H_\C$ are closed in $GL(n, \C)$. 
Put $G_U = G_\C \cap U(N)$ 
and $H_U = H_\C \cap U(N)$. They are compact connected real forms of $G_\C$ and $H_\C$, respectively. 
Finally, put $K_G = G \cap O(N)$ and $K_H = H \cap O(N)$. They are
maximal compact subgroups of $G$ and $H$, respectively. 
\end{conv}

\begin{rem}
(1) Since we assumed that $H$ is connected, $K_H$ is also connected by the Cartan decomposition, 
and hence a closed subgroup of $H_U$. 

(2) The assumption that $H$ is connected is not a serious restriction 
by the following result, 
which is essentially proved in Kobayashi \cite{Kob89}:
\end{rem}

\begin{fact}\label{fact:connected}
Let $G$ be a linear Lie group and $H$ a closed subgroup of $G$. 
Suppose $H$ has finitely many connected components. We denote by $H_o$ the 
identity component of $H$. 
Then, $G/H$ admits a compact Clifford--Klein form if and only if 
$G/H_o$ admits a compact Clifford--Klein form. 
\end{fact}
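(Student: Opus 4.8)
The plan is to reduce the question about $H$ with several components to the identity component $H_o$ by exploiting the fact that $\Gamma \bs G/H$ and $\Gamma \bs G/H_o$ are related by a finite covering, together with a standard averaging/finite-index argument on the discrete group $\Gamma$. First I would treat the easy implication: if $\Gamma \bs G/H_o$ is a compact Clifford--Klein form, then the natural surjection $\Gamma \bs G/H_o \to \Gamma \bs G/H$ is a finite covering map (fibres are in bijection with $H/H_o$, which is finite by hypothesis), so $\Gamma \bs G/H$ is compact; properness of the $\Gamma$-action on $G/H$ follows from properness on $G/H_o$ since $G/H$ is a quotient of $G/H_o$ by a finite group, and freeness can be arranged after passing to a finite-index torsion-free subgroup of $\Gamma$ (Selberg's lemma, using that $G$ is linear), which does not affect compactness. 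Hence $G/H$ admits a compact Clifford--Klein form.

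For the converse — the direction that actually requires Kobayashi's argument — suppose $\Gamma \bs G/H$ is a compact Clifford--Klein form. The idea is that properness is insensitive to replacing $H$ by $H_o$: a subgroup acts properly on $G/H$ iff it acts properly on $G/H_o$, because $H$ and $H_o$ are uniformly commensurable (their ``Cartan projections'' to a maximal split torus differ by a bounded amount), and by Kobayashi's properness criterion properness of the $\Gamma$-action depends only on the asymptotic behaviour of the Cartan projections. So $\Gamma$ already acts properly on $G/H_o$; I would then pass to a torsion-free finite-index subgroup $\Gamma' \le \Gamma$ to ensure freeness, and check that $\Gamma' \bs G/H_o$ is compact. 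Compactness is the one genuinely new point here: $\Gamma' \bs G/H_o \to \Gamma' \bs G/H$ has fibre $H/H_o$, which is \emph{finite}, so the total space is compact iff the base is, and $\Gamma' \bs G/H$ is compact because it finitely covers the compact space $\Gamma \bs G/H$.

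The main obstacle I anticipate is making the properness transfer between $G/H$ and $G/H_o$ fully rigorous. One clean way is to avoid the Cartan-projection machinery entirely and argue set-theoretically: a set $\Gamma \cdot C \cdot H$ is relatively compact in $G$ iff $\Gamma \cdot (C \cdot F) \cdot H_o$ is, where $F \subset H$ is a finite set of coset representatives for $H/H_o$; since $C$ compact implies $C \cdot F$ compact, the Bruhat--type properness conditions for the two actions are literally equivalent. This makes the equivalence elementary and avoids any reductive-group hypotheses beyond those already in force. I would present the argument in this form, note explicitly where finiteness of $\pi_0(H)$ is used (both for the covering being finite and for $C\cdot F$ being compact), and remark that the statement and proof do not require $H$ to be reductive in $G$, which is why it is stated under the weaker hypothesis ``$G$ linear, $H$ closed with finitely many components.''
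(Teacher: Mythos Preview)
The paper does not supply a proof of Fact~\ref{fact:connected}; it is stated as a fact ``essentially proved in Kobayashi~\cite{Kob89}'' and then used without further justification. So there is no in-paper argument to compare against, and your outline is the kind of thing the author presumably had in mind.

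Your overall strategy is correct, but you have the relative difficulty of the two implications reversed, and one displayed assertion is wrong as written. For the direction you call the ``converse'' (compact Clifford--Klein form for $G/H$ $\Rightarrow$ one for $G/H_o$), no Cartan-projection machinery is needed at all: since $H_o \subset H$, properness of $\Gamma$ on $G/H$ trivially implies properness on $G/H_o$, and likewise for freeness. This is exactly Lemma~\ref{lem:ob}(1)(2) in the present paper (with $G_2 = H$, $G_3 = H_o$). By Lemma~\ref{lem:ob}(3) the map $\Gamma\backslash G/H_o \to \Gamma\backslash G/H$ is a fibre bundle with finite fibre $H/H_o$, hence proper, so compactness transfers upward. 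No passage to a torsion-free subgroup is required here. Your ``set-theoretic'' sentence, that ``$\Gamma \cdot C \cdot H$ is relatively compact in $G$ iff $\Gamma \cdot (C\cdot F)\cdot H_o$ is,'' is not the correct reformulation of properness (these sets are $\Gamma$-invariant and essentially never relatively compact); the relevant statement is that $\Gamma \cap CHC^{-1}$ is finite iff $\Gamma \cap C'H_oC'^{-1}$ is finite for a suitable compact $C'$, and the direction you need ($H \rightsquigarrow H_o$) is just the inclusion $CH_oC^{-1}\subset CHC^{-1}$.

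The only place where something substantive happens is your ``easy'' implication. Properness on $G/H_o$ does imply properness on $G/H$ via the finite-coset trick $H = H_oF$, as you indicate; but freeness on $G/H_o$ does \emph{not} imply freeness on $G/H$, and this is precisely where linearity of $G$ and Selberg's lemma enter: pass to a torsion-free finite-index $\Gamma'\le\Gamma$, note that a proper action with finite stabilizers and no torsion is free, and check (as you do) that compactness survives finite covers in both directions. So your write-up should swap the emphasis: Lemma~\ref{lem:ob} handles the second implication outright, and Selberg's lemma is the genuine input in the first.
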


We prepare some results on the topology of Clifford--Klein forms 
to prove Theorem~\ref{thm:main}. First, we observe that: 

\begin{lem}\label{lem:ob}
Let $G_1$ be a Lie group, $G_2$ a closed subgroup of $G_1$, 
$G_3$ a closed subgroup of $G_2$ and 
$\Gamma$ a discrete subgroup of $G_1$. 

\begin{enumerate}[(1)]
\item If $\Gamma$ acts properly discontinuously on $G_1/G_2$, 
it also acts properly discontinuously on $G_1/G_3$. 
\item If $\Gamma$ acts freely on $G_1/G_2$, 
it also acts freely on $G_1/G_3$. 
\item If the assumptions of (1) and (2) are satisfied, 
the projection
$\pi : \Gamma \bs G_1/G_3 \to \Gamma \bs G_1/G_2$
becomes a fibre bundle with typical fibre $G_2/G_3$. 
\end{enumerate}
\end{lem}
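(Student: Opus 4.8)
The plan is to deduce everything from the definitions of proper discontinuity and freeness by tracking how a point and its stabilizer-type data behave under the natural projection $p : G_1/G_3 \to G_1/G_2$, which is $\Gamma$-equivariant. For part (1), recall that $\Gamma$ acts properly discontinuously on $G_1/G_2$ means that for every compact $S \subseteq G_1/G_2$ the set $\{\gamma \in \Gamma : \gamma S \cap S \neq \emptyset\}$ is finite. Given a compact subset $S' \subseteq G_1/G_3$, its image $p(S')$ is compact in $G_1/G_2$; since $p$ is $\Gamma$-equivariant, $\gamma S' \cap S' \neq \emptyset$ forces $\gamma\, p(S') \cap p(S') \neq \emptyset$, so the ``bad set'' for $S'$ is contained in the bad set for $p(S')$, hence finite. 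For part (2), freeness on $G_1/G_2$ says that if $\gamma \in \Gamma$ fixes some point $gG_2$, then $\gamma = e$ (equivalently $\gamma \notin g G_2 g^{-1} \setminus \{e\}$ for all $g$); since $G_3 \subseteq G_2$ gives $gG_3g^{-1} \subseteq gG_2g^{-1}$, any $\gamma$ fixing a point $gG_3 \in G_1/G_3$ lies in $gG_3g^{-1} \subseteq gG_2g^{-1}$ and also fixes $gG_2$, so it is $e$.

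For part (3), with the hypotheses of (1) and (2) in force, both $\Gamma \bs G_1/G_3$ and $\Gamma \bs G_1/G_2$ are manifolds and the map $\pi$ is well-defined. The key point is that the map $p : G_1/G_3 \to G_1/G_2$ is already a fibre bundle with fibre $G_2/G_3$ (this is the standard fact that for nested closed subgroups the associated map of homogeneous spaces is a locally trivial bundle, via local sections of $G_1 \to G_1/G_2$), and that $\Gamma$ acts on the total space by bundle automorphisms covering its action on the base. I would then argue that quotienting a fibre bundle by a properly discontinuous free action that is fibrewise (i.e.\ preserves fibres and acts freely properly discontinuously on the base) yields again a fibre bundle with the same typical fibre: a local trivialization $U \times G_2/G_3$ of $p$ over a small enough $U \subseteq G_1/G_2$ can be chosen so that $U$ maps homeomorphically to its image $\bar U$ in $\Gamma \bs G_1/G_2$ (shrink $U$ using proper discontinuity so that no nontrivial $\gamma$ translates $U$ into itself), and then $\pi^{-1}(\bar U) \cong U \times G_2/G_3 \cong \bar U \times G_2/G_3$ compatibly with the projections.

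The main obstacle — really the only nontrivial point — is the last claim: producing trivializing neighbourhoods in the quotient base over which $\pi$ is trivial. This requires choosing $U$ small enough to be ``evenly covered'' by $G_1/G_2 \to \Gamma \bs G_1/G_2$ (which is where proper discontinuity and freeness of the $\Gamma$-action on $G_1/G_2$ enter) and simultaneously small enough to trivialize $p$; one then checks the cocycle/gluing data descends. Everything else is a direct unwinding of definitions, so I expect the write-up to be short, with the bulk of the care going into the fibre-bundle assertion in (3).
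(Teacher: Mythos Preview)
Your proposal is correct and follows essentially the same approach as the paper, though in considerably more detail: the paper disposes of (1) by citing \cite[Lemma~1.3~(1)]{Kob93}, gives exactly your conjugation characterization for (2), and declares (3) to follow immediately from (1) and (2) without spelling out the local-trivialization argument you outline.
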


\begin{proof}
(1) This is a special case of \cite[Lemma 1.3 (1)]{Kob93}. 

(2) $\Gamma$ acts freely on $G_1/G_2$ if and only if 
$x (\Gamma - \{ 1\}) x^{-1} \cap G_2 = \emptyset$ 
for any $x \in G_1$. Thus the statement follows. 

(3) This follows immediately from (1) and (2). 
\end{proof}

Suppose $\Gamma \bs G/H$ is a Clifford--Klein form of $G/H$. 
Then it follows from Lemma~\ref{lem:ob} (1) (2) that 
$\Gamma \bs G/K_H$ is also a Clifford--Klein form.
We do not assume that $\Gamma \bs G/H$ is compact 
until the compactness is needed. 

\begin{prop}\label{prop:isom}
The projection 
$\pi : \Gamma \bs G/K_H \to \Gamma \bs G/H$
induces an isomorphism 
\[
\pi^\ast: H^\bl(\Gamma \bs G/H; \C) \simto H^\bl(\Gamma \bs G/K_H; \C).
\]
\end{prop}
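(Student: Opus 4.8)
The plan is to show that the fibre bundle $\pi : \Gamma \bs G/K_H \to \Gamma \bs G/H$ from Lemma~\ref{lem:ob}~(3), which has typical fibre $H/K_H$, induces an isomorphism on $\C$-cohomology because the fibre $H/K_H$ is contractible. Indeed, $H$ is reductive in $G$, so by the Cartan decomposition $H \cong K_H \times (\text{vector space})$ as a manifold, whence $H/K_H$ is diffeomorphic to a Euclidean space and in particular is contractible. A fibre bundle with contractible fibre over a (reasonable, e.g.\ paracompact) base is a homotopy equivalence, so $\pi^\ast$ is an isomorphism on cohomology with any coefficients. This is the conceptual heart of the argument.

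To make this rigorous I would proceed as follows. First, record that $H/K_H$ is diffeomorphic to $\R^d$ for $d = \dim H - \dim K_H$: this is immediate from Convention~\ref{conv:red}, since $H$ is stable under transposition and the Cartan decomposition $H = K_H \exp(\mathfrak{p}_H)$ gives a diffeomorphism $K_H \times \mathfrak{p}_H \simto H$, hence $\mathfrak{p}_H \simto H/K_H$. Second, observe that the bundle $\pi$ admits a global section, or more simply, argue that $\pi$ is a fibration whose fibre is contractible. One clean way: the same Cartan-type decomposition applied fibrewise shows that $\Gamma \bs G/K_H \to \Gamma \bs G/H$ is (the total space of) a vector bundle — concretely, it is the quotient by $\Gamma$ of $G \times_H \mathfrak{p}_H \to G/H$, and $\Gamma$ acts on $G \times_H \mathfrak{p}_H$ compatibly with the linear structure. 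A vector bundle projection is a homotopy equivalence (the zero section is a homotopy inverse), so $\pi$ is a homotopy equivalence and $\pi^\ast$ is an isomorphism on $H^\bl(-;\C)$.

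Alternatively, if one prefers to avoid identifying the bundle structure explicitly, one can invoke the Leray--Hirsch theorem or the Serre spectral sequence of the fibration $H/K_H \hookrightarrow \Gamma \bs G/K_H \to \Gamma \bs G/H$: since $H^\bl(H/K_H; \C) = H^\bl(\R^d; \C) = \C$ concentrated in degree $0$, the $E_2$-page is $E_2^{p,q} = H^p(\Gamma \bs G/H; \C) \otimes H^q(H/K_H; \C)$, which vanishes for $q > 0$, so the spectral sequence collapses and the edge homomorphism $\pi^\ast : H^\bl(\Gamma \bs G/H; \C) \to H^\bl(\Gamma \bs G/K_H; \C)$ is an isomorphism. (Here $\Gamma \bs G/H$ is a manifold, hence paracompact, so the spectral sequence applies with no trouble.)

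The main obstacle is essentially bookkeeping rather than depth: one must make sure the Cartan decomposition of $H$ interacts correctly with the $G$-action and the $\Gamma$-action so that the fibrewise picture really is a vector bundle on $\Gamma \bs G/H$ and not merely on $G/H$ — but this is automatic since $\Gamma$ acts on $G$ by left translations, which commute with the right $H$-action used to form the associated bundle, and the action is properly discontinuous and free on both $G/H$ and $G/K_H$ by Lemma~\ref{lem:ob}. No compactness of $\Gamma \bs G/H$ is used anywhere, consistent with the remark preceding the proposition. I would write the proof via the vector-bundle observation, as it is the shortest and most self-contained.
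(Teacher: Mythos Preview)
Your proposal is correct and essentially matches the paper's own proof: the paper also invokes Lemma~\ref{lem:ob}~(3) and the Cartan decomposition to conclude that $\pi$ is a fibre bundle with contractible fibre $H/K_H$, and then cites the Leray--Serre spectral sequence to obtain the isomorphism. Your additional vector-bundle description is a nice concrete alternative, but the paper's argument is precisely your ``alternatively'' paragraph.
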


\begin{proof}
By Lemma~\ref{lem:ob} (3) and the Cartan decomposition, the projection 
$\pi : \Gamma \bs G/K_H \to \Gamma \bs G/H$ is a fibre bundle 
with contractible typical fibre $H/K_H$. 
Thus the statement is an immediate consequence of
the Leray--Serre spectral sequence. 
\end{proof}

Next, let us recall a homomorphism $\eta$ constructed in
Kobayashi--Ono \cite{KO}. The space $\A^p(G/H)^{G}$ 
of $G$-invariant $p$-forms on $G/H$ 
is canonically isomorphic to 
$(\Lambda^p (\g/\h)^*)^H = (\Lambda^p (\g/\h)^*)^\h$ 
($H$ is connected). 
Likewise, $\A^p(G_U/H_U)^{G_U}$ is canonically isomorphic to 
$(\Lambda^p (\gu/\hu)^\ast)^\hu$.
The natural isomorphism
\[
(\Lambda^p (\gu/\hu)^\ast)^\hu \otimes \C \simeq 
(\Lambda^p (\gc/\hc)^\ast)^\hc \simeq 
(\Lambda^p (\g/\h)^\ast)^\h \otimes \C
\]
induces
\[
\eta : \A^p(G_U/H_U)^{G_U} \otimes \C \simto 
\A^p(G/H)^{G} \otimes \C \hookrightarrow 
\A^p(\Gamma \bs G/H) \otimes \C.
\]
Taking cohomology, we obtain 
\[
\eta : H^p(G_U/H_U; \C) \to H^p(\Gamma \bs G/H; \C).
\]

\begin{fact}\label{fact:inj} {\rm (see \cite[Proposition 3.9]{KO})}
If $\Gamma \bs G/H$ is compact, $\eta$ is injective. 
\end{fact}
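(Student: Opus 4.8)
The plan is to reconstruct the Kobayashi--Ono argument, whose core is a Poincar\'e-duality comparison of $G_U/H_U$ with the compact manifold $\Gamma \bs G/H$, carried out inside the complexes of invariant forms; write $n = \dim G/H = \dim G_U/H_U$. The first point to pin down is that $\eta$ is a chain map and a homomorphism of graded algebras, so that it induces a ring homomorphism on cohomology. Indeed, on forms $\eta$ is the composite of the canonical isomorphism $(\Lambda^\bl(\gu/\hu)^\ast)^\hu \otimes \C \cong (\Lambda^\bl(\gc/\hc)^\ast)^\hc \cong (\Lambda^\bl(\g/\h)^\ast)^\h \otimes \C$ with the inclusion into $\A^\bl(\Gamma \bs G/H) \otimes \C$; the inclusion is clearly an algebra map and a chain map, and the canonical isomorphism preserves wedge products and intertwines the differentials because the relative Chevalley--Eilenberg differential is intrinsic to the pair $(\gc,\hc)$ (the de Rham differential on invariant forms agrees with that differential on each of $G/H$ and $G_U/H_U$, and both complexify to the differential on $(\Lambda^\bl(\gc/\hc)^\ast)^\hc$).

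Next I would settle the top degree. Since $H_U$ is compact it acts unimodularly on $\gu/\hu$, so $(\Lambda^n(\gu/\hu)^\ast)^\hu$ is one-dimensional, and complexifying shows $(\Lambda^n(\g/\h)^\ast)^\h$ is one-dimensional too. Hence $\eta$ carries a $G_U$-invariant volume form $\mathrm{vol}_{G_U/H_U}$ to a nonzero $G$-invariant $n$-form on $G/H$; being invariant it vanishes nowhere, so it descends to a nowhere-vanishing $n$-form $\mathrm{vol}$ on $\Gamma \bs G/H$ (which is thus orientable). This is the one place compactness enters: as $\Gamma \bs G/H$ is compact, $\int_{\Gamma \bs G/H} \mathrm{vol} \neq 0$, so $\eta([\mathrm{vol}_{G_U/H_U}]) = [\mathrm{vol}] \neq 0$ in $H^n(\Gamma \bs G/H; \C)$; that is, $\eta$ is injective in degree $n$. (Note $G_U/H_U$ is also orientable, as the isotropy representation of the connected group $H_U$ lands in $SO(\gu/\hu)$.)

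To pass to all degrees I would invoke Poincar\'e duality. Because $G_U$ is compact and connected, averaging over $G_U$ makes the inclusion $\A^\bl(G_U/H_U)^{G_U} \otimes \C \hookrightarrow \A^\bl(G_U/H_U) \otimes \C$ a quasi-isomorphism, so every class in $H^\bl(G_U/H_U; \C)$ has a closed $G_U$-invariant representative and cup products are computed among invariant forms. Let $0 \neq [\omega] \in H^p(G_U/H_U; \C)$ with closed invariant representative $\omega$. By Poincar\'e duality on the compact oriented manifold $G_U/H_U$ there is $[\psi] \in H^{n-p}(G_U/H_U; \C)$, represented by a closed invariant form $\psi$, with $\int_{G_U/H_U} \omega \wedge \psi \neq 0$; as $\omega \wedge \psi$ is an invariant top form and such forms span a line, $\omega \wedge \psi = c\,\mathrm{vol}_{G_U/H_U}$ with $c \neq 0$. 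Applying $\eta$ and passing to cohomology, $\eta([\omega]) \cup \eta([\psi]) = \eta([\omega \wedge \psi]) = c\,[\mathrm{vol}] \neq 0$, so $\eta([\omega]) \neq 0$. Hence $\eta$ is injective.

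The part demanding the most care is bookkeeping rather than a new idea: checking that the complexified identification of invariant exterior algebras really does respect both the wedge product and the (relative Chevalley--Eilenberg) differential, and nailing down the one-dimensionality statements that make ``the invariant volume form'' well defined on both $G_U/H_U$ and $G/H$ with nonzero image on $\Gamma \bs G/H$. Given those, the genuine use of compactness --- that the descended volume form pairs nontrivially with the fundamental class of $\Gamma \bs G/H$ --- and the Poincar\'e-duality pairing are routine.
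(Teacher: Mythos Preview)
The paper does not supply its own proof of this statement: it records it as a Fact with a bare citation to \cite[Proposition~3.9]{KO}. So there is nothing to compare against in the paper itself.

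That said, your argument is a faithful and correct reconstruction of the Kobayashi--Ono proof. The three ingredients are exactly the ones they use: (i) $\eta$ is a morphism of differential graded algebras, hence multiplicative on cohomology; (ii) in top degree $n$, the $G_U$-invariant volume form is carried to a nowhere-vanishing $G$-invariant $n$-form on $G/H$, whose descent to the compact manifold $\Gamma\bs G/H$ has nonzero integral, so $\eta$ is injective on $H^n$; (iii) Poincar\'e duality on the compact oriented $G_U/H_U$, together with the fact that invariant forms compute its cohomology, upgrades top-degree injectivity to all degrees via $\eta([\omega])\cup\eta([\psi])=c\,[\mathrm{vol}]\neq 0$. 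One small cosmetic point: your orientability claim for $\Gamma\bs G/H$ is routed through ``nowhere-vanishing complex $n$-form''; it is slightly cleaner to note directly that, since $H$ is connected and reductive in $G$, the isotropy action of $H$ on $\g/\h$ has determinant $1$ (trace of $\mathrm{ad}$ vanishes on a reductive Lie algebra), so there is already a real $G$-invariant volume form on $G/H$. Either way the conclusion stands.
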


Now we shall prove Theorem~\ref{thm:main}. 

\begin{proof}[Proof of Theorem~\ref{thm:main}.] 
$\eta : H^\bl(G_U/K_H; \C) \to H^\bl(\Gamma \bs G/K_H; \C)$ 
can be defined in the same way as above. By definition, the diagram
\[
\xymatrix{
H^\bl(G_U/H_U; \C) \ar[r]^{\eta} \ar[d]^{\pi^\ast} & 
H^\bl(\Gamma \bs G/H; \C) \ar[d]^{\pi^\ast} \\
H^\bl(G_U/K_H; \C) \ar[r]^{\eta} & 
H^\bl(\Gamma \bs G/K_H; \C)
}
\]
is commutative. By Proposition~\ref{prop:isom}, 
$\pi^\ast$ on the right-hand side is isomorphic. 
If $\Gamma \bs G/H$ is compact, 
then $\eta$ on the above is injective by Fact~\ref{fact:inj} and 
therefore $\pi^\ast$ on the left-hand side has to be injective. 
This completes the proof. 
\end{proof}

\section{The Chern--Weil homomorphism and non-injectivity}

To apply Theorem~\ref{thm:main}, 
we have to find examples of $G/H$ such that 
$\pi^\ast : H^\bl(G_U/H_U; \C) \to H^\bl(G_U/K_H; \C)$ 
is not injective. 
In this section, we give a sufficient condition 
for non-injectivity, which is easy to verify in typical cases.

$\pi : G_U \to G_U/H_U$ is a principal $H_U$-bundle. 
Thus the Chern--Weil characteristic homomorphism 
\[
w : (S^p (\hu)^\ast)^{H_U} \to H^{2p}(G_U/H_U; \R) 
\subset H^{2p}(G_U/H_U; \C)
\]
is defined. It is straightforward to see that the diagram 
\[
\xymatrix{
(S (\hu)^\ast)^{H_U} \ar[r]^{w \ \ \ } \ar[d]^{\rest} & 
H^\bl(G_U/H_U; \C) \ar[d]^{\pi^\ast} \\
(S (\kh)^\ast)^{K_H} \ar[r]^{w \ \ \ } & 
H^\bl(G_U/K_H; \C)
}
\]
is commutative. Here, 
$\rest : (S (\hu)^\ast)^{H_U} \to (S (\kh)^\ast)^{K_H}$ is
the restriction map. 

\begin{fact}\label{fact:kerw} {\rm (see \cite[\S 10]{Car50-2})}
\[
\ker \left( w : (S (\hu)^\ast)^{H_U} \to H^\bl(G_U/H_U; \C) \right)
\]
is equal to the ideal $J_{G_U/H_U}$ generated by 
\[
\bigoplus_{p=1}^\infty 
\im \left( \rest : (S^p (\gu)^\ast)^{G_U} 
\to (S^p (\hu)^\ast)^{H_U} \right) .
\]
\end{fact}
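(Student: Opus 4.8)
The plan is to reduce the statement to a computation in commutative algebra by invoking H.~Cartan's algebraic model for the de Rham cohomology of a compact homogeneous space. Write $P_G := (S(\gu)^\ast)^{G_U} \cong H^\bl(BG_U;\R)$ and $P_H := (S(\hu)^\ast)^{H_U} \cong H^\bl(BH_U;\R)$; by Borel's theorem these are polynomial algebras over $\R$ on finitely many generators of even degree, and $\rest : P_G \to P_H$ is a homomorphism of graded rings. Let $\hat P \subseteq H^\bl(G_U;\R)$ be the (finite-dimensional, odd-degree) space of primitive elements, so that $H^\bl(G_U;\R) = \Lambda\hat P$, and recall (Hopf--Samelson, Borel) that every primitive class of a compact Lie group is transgressive and that a choice of transgression $\tau : \hat P \to P_G$ sends a basis of $\hat P$ to a set of polynomial generators of $P_G$. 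Cartan's theorem (\cite[\S 10]{Car50-2}; see also Greub--Halperin--Vanstone, \emph{Connections, Curvature, and Cohomology}, Vol.~III, Ch.~XI) then identifies $H^\bl(G_U/H_U;\R)$ with the cohomology of the differential graded algebra $(K, d)$, where $K := P_H \otimes_\R \Lambda\hat P$, the differential $d$ vanishes on $P_H$ and equals $\rest \circ \tau$ on $\hat P$; moreover the identification is natural enough that the Chern--Weil homomorphism $w$ becomes the map $P_H \to H^\bl(K, d)$ sending an invariant polynomial $P$ (which is automatically a $d$-cocycle) to its class $[P]$. Proving this last sentence is the main obstacle: it is exactly what \cite[\S 10]{Car50-2} establishes — via the Weil algebra $W(\hu) = S(\hu)^\ast \otimes \Lambda(\hu)^\ast$ and the identification $W(\hu)_{\mathrm{bas}} \cong (S(\hu)^\ast)^{H_U}$, the transgression theorem for the principal bundle $G_U \to G_U/H_U$, and a degeneration argument for the Weil model of $G_U/H_U$ — and it is the only part of the argument that requires substantial work.

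Granting this, $w(P) = 0$ for $P \in P_H$ means precisely that the cocycle $P$ is a coboundary in $K$, so $\ker w = P_H \cap \im d$; since $w$ is a graded ring homomorphism, $\ker w$ is a graded ideal of $P_H$, and it suffices to identify it in each degree. For the inclusion $J_{G_U/H_U} \subseteq \ker w$: for every basis vector $e$ of $\hat P$ one has $\rest(\tau e) = d(e) \in \im d$, hence $\tau e$ maps into $\ker w$; as the elements $\tau e$ generate $P_G$ as an algebra, the ideal they generate in $P_H$ coincides with the ideal generated by $\rest\bigl(\bigoplus_{p \geq 1}(S^p(\gu)^\ast)^{G_U}\bigr)$, that is, with $J_{G_U/H_U}$; being contained in the ideal $\ker w$, we conclude $J_{G_U/H_U} \subseteq \ker w$. (Geometrically this is the statement that characteristic classes induced from the ambient group vanish on $G_U/H_U$: $w \circ \rest$ is pullback along $G_U/H_U \to BH_U \to BG_U$, and this composite classifies the $G_U$-bundle obtained by enlarging the structure group of $G_U \to G_U/H_U$ from $H_U$ to $G_U$, which is the trivial bundle $(G_U/H_U) \times G_U$, hence null-homotopic.)

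For the reverse inclusion, let $P \in P_H$ be homogeneous with $P = d\omega$. Equip $K$ with the exterior-degree grading $K = \bigoplus_{k \geq 0} P_H \otimes \Lambda^k \hat P$; since $d$ lowers exterior degree by exactly one, writing $\omega = \sum_k \omega^{(k)}$ and comparing the exterior-degree components of $P = \sum_{k \geq 1} d\omega^{(k)}$ forces $d\omega^{(k)} = 0$ for $k \geq 2$ and $P = d\omega^{(1)}$. If $\omega^{(1)} = \sum_i p_i \, e_i$ in a basis $(e_i)$ of $\hat P$ with $p_i \in P_H$, the Leibniz rule gives $P = \sum_i (\pm p_i)\, \rest(\tau e_i)$, which lies in the ideal generated by $\{\rest(\tau e_i)\}$, i.e.\ in $J_{G_U/H_U}$. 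Hence $\ker w = P_H \cap \im d = J_{G_U/H_U}$. Finally, the $w$ of the statement is the above map followed by the inclusion $H^\bl(G_U/H_U;\R) \hookrightarrow H^\bl(G_U/H_U;\C)$, which does not change the kernel, so the asserted equality follows.
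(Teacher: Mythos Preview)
The paper does not actually supply a proof of this statement: it records it as a \emph{Fact} with a bare citation to Cartan~\cite[\S 10]{Car50-2}. Your proposal is therefore not competing against any argument in the paper; rather, you have written out the standard derivation of the statement from Cartan's model for $H^\bl(G_U/H_U;\R)$, which is precisely the content of the cited reference.

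Your argument is correct. Once one accepts Cartan's identification of $H^\bl(G_U/H_U;\R)$ with the cohomology of the Koszul complex $K = P_H \otimes \Lambda\hat P$ (with $d|_{P_H}=0$ and $d(e_i)=\rest(\tau e_i)$) together with the compatibility of $w$ with the inclusion $P_H \hookrightarrow K$, the two inclusions are exactly as you describe. The forward inclusion $J_{G_U/H_U}\subseteq\ker w$ follows because each generator $\rest(\tau e_i)$ is visibly exact, and the $\tau e_i$ generate $P_G$ as an algebra, so the ideal they generate in $P_H$ after restriction is $J_{G_U/H_U}$. For the reverse inclusion, your filtration by exterior degree is the clean way to see it: since $d$ lowers exterior degree by exactly one, a relation $P=d\omega$ with $P\in P_H$ forces $P=d\omega^{(1)}=\sum_i p_i\,\rest(\tau e_i)\in J_{G_U/H_U}$. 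You are also right to flag that the nontrivial content lies entirely in establishing the Cartan model and the identification of $w$ with $P\mapsto[P]$; this is what \cite{Car50-2} (or the treatment in Greub--Halperin--Vanstone or \cite{GuiSte}) provides, and it is reasonable to black-box it here just as the paper does.
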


\begin{prop}\label{prop:noninj}
Assume that 
\[
\ker \left( \rest : (S (\hu)^\ast)^{H_U} 
\to (S (\kh)^\ast)^{K_H} \right) \not\subset J_{G_U/H_U}, 
\]
where $J_{G_U/H_U}$ is as in Fact~\ref{fact:kerw}.
Then the homomorphism 
$\pi^\ast: H^\bl(G_U/H_U; \C) \to H^\bl(G_U/K_H; \C)$ 
induced by the projection $\pi : G_U/K_H \to G_U/H_U$ is not injective, 
and hence $G/H$ does not admit a compact Clifford--Klein form. 
\end{prop}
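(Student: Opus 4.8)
The plan is to exploit the commutative square relating the Chern--Weil homomorphisms with $\pi^\ast$ and the restriction map, together with Fact~\ref{fact:kerw}, which identifies the kernel of $w$ for both principal bundles $G_U \to G_U/H_U$ and $G_U \to G_U/K_H$. The idea is simple: if $\pi^\ast$ were injective, then the kernel of the top $w$ would coincide with the kernel of the composition $\pi^\ast \circ w$, which equals the kernel of $w \circ \rest$ by commutativity; and Fact~\ref{fact:kerw} lets us compute the latter. I would then check that this forces $\ker(\rest) \subset J_{G_U/H_U}$, contradicting the hypothesis.

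Concretely, first I would record that by Fact~\ref{fact:kerw} applied to $G_U/H_U$ we have $\ker(w : (S(\hu)^\ast)^{H_U} \to H^\bl(G_U/H_U;\C)) = J_{G_U/H_U}$, and applied to $G_U/K_H$ (noting $K_G$ plays no role here since we use the $G_U$-bundle $G_U \to G_U/K_H$, so the relevant invariants upstairs are still $(S(\gu)^\ast)^{G_U}$, and $\rest$ now lands in $(S(\kh)^\ast)^{K_H}$) we get $\ker(w : (S(\kh)^\ast)^{K_H} \to H^\bl(G_U/K_H;\C)) = J_{G_U/K_H}$, the ideal generated by the image of $(S^{\geq 1}(\gu)^\ast)^{G_U}$ under the composite restriction $(S(\gu)^\ast)^{G_U} \to (S(\kh)^\ast)^{K_H}$. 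Next, suppose for contradiction that $\pi^\ast$ is injective. Take $x \in \ker(\rest : (S(\hu)^\ast)^{H_U} \to (S(\kh)^\ast)^{K_H})$. Then $\pi^\ast(w(x)) = w(\rest(x)) = w(0) = 0$ by commutativity of the square, and injectivity of $\pi^\ast$ gives $w(x) = 0$, i.e. $x \in J_{G_U/H_U}$. Since $x$ was arbitrary, $\ker(\rest) \subset J_{G_U/H_U}$, which contradicts the hypothesis. Hence $\pi^\ast$ is not injective, and Theorem~\ref{thm:main} gives that $G/H$ admits no compact Clifford--Klein form.

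I do not expect a genuine obstacle in this argument; it is a formal diagram chase once the two applications of Fact~\ref{fact:kerw} are in place. The one point requiring a little care is the second application of Fact~\ref{fact:kerw}: one must make sure it is legitimately applied to the principal $K_H$-bundle $G_U \to G_U/K_H$, so that the ideal appearing is generated by the image of the $G_U$-invariants restricted all the way down to $K_H$-invariants — and one should observe that this composite restriction $(S(\gu)^\ast)^{G_U} \to (S(\hu)^\ast)^{H_U} \to (S(\kh)^\ast)^{K_H}$ factors through $\rest$ on $(S(\hu)^\ast)^{H_U}$, which is exactly what makes the commutative square do its job. Beyond that, the final sentence of the proposition is immediate from Theorem~\ref{thm:main}. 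So the write-up is short: state the two kernel computations, do the one-line chase, conclude.
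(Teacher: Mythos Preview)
Your argument is correct and is essentially the contrapositive of the paper's direct proof: the paper simply picks $P \in \ker(\rest) \setminus J_{G_U/H_U}$, notes $w(P) \neq 0$ by Fact~\ref{fact:kerw}, and observes $\pi^\ast w(P) = w(\rest P) = 0$ from the commutative square. Note that your second application of Fact~\ref{fact:kerw} (computing $J_{G_U/K_H}$) is unnecessary---your own chase only uses $w(0)=0$, not the full kernel description for the $K_H$-bundle.
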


\begin{proof}
By Fact~\ref{fact:kerw}, 
we can pick $P \in (S (\hu)^\ast)^{H_U}$ such that 
$w(P) \neq 0$ and $P |_{\kh} = 0$. 
Then $w(P) \in H(G_U/H_U; \C)$ is a non-zero element of 
a kernel of $\pi^\ast : H(G_U/H_U; \C) \to H(G_U/K_H; \C)$. 
\end{proof}

By Chevalley's restriction theorem, 
we can rewrite Proposition~\ref{prop:noninj} 
in terms of Cartan subalgebras and Weyl groups as follows. 

\begin{conv}
We take maximal tori $T_{G_U}$ of $G_U$, $T_{H_U}$ of $H_U$, 
$T_{K_G}$ of $K_G$ and $T_{K_H}$ of $K_H$ such that 
$T_{G_U} \supset T_{H_U} \supset T_{K_H}$ and 
$T_{K_G} \supset T_{K_H}$. 
Their Lie algebras and their Weyl groups are denoted by 
$\tgu$, $\thu$, $\tkg$, $\tkh$, 
$W_{G_U}$, $W_{H_U}$, $W_{K_G}$ and $W_{K_H}$, respectively. 
\end{conv}

Let us denote by $I_{G_U/H_U}$ an ideal of $(S (\thu)^\ast)^{W_\hu}$ 
generated by 
\[
\bigoplus_{p=1}^\infty 
\im\left( 
\rest : (S^p (\tgu)^\ast)^{W_{G_U}} \to (S^p (\thu)^\ast)^{W_{H_U}} 
\right). 
\]
In other words, $I_{G_U/H_U}$ is 
the ideal of $(S (\thu)^\ast)^{W_{H_U}}$
corresponding to $J_{G_U/H_U}$ under the isomorphism
$(S (\thu)^\ast)^{W_{H_U}} \simeq (S (\hu)^\ast)^{H_U}$.

\begin{cor}\label{cor:noninj2}
Assume that 
\[
\ker\left( 
\rest : (S (\thu)^\ast)^{W_{H_U}} \to (S (\tkh)^\ast)^{W_{K_H}} 
\right)
\not\subset I_{G_U/H_U}. 
\]
Then $G/H$ does not admit a compact Clifford--Klein form. 
\end{cor}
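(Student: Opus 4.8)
The plan is to deduce this from Proposition~\ref{prop:noninj} by transporting its hypothesis through the Chevalley restriction isomorphisms. Since $H_U$ and $K_H$ are compact connected Lie groups, restriction of invariant polynomials to the chosen maximal tori gives isomorphisms $r_H : (S(\hu)^\ast)^{H_U} \simto (S(\thu)^\ast)^{W_{H_U}}$ and $r_K : (S(\kh)^\ast)^{K_H} \simto (S(\tkh)^\ast)^{W_{K_H}}$. The heart of the argument is to show that these isomorphisms fit into a commutative square with the restriction map $\rest : (S(\hu)^\ast)^{H_U} \to (S(\kh)^\ast)^{K_H}$ appearing in Proposition~\ref{prop:noninj} and the restriction map $\rest : (S(\thu)^\ast)^{W_{H_U}} \to (S(\tkh)^\ast)^{W_{K_H}}$ appearing in the statement above.

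First I would check that the latter map is even well defined, i.e.\ that restricting a $W_{H_U}$-invariant polynomial on $\thu$ to $\tkh$ produces a $W_{K_H}$-invariant polynomial; this is not formal from the inclusion $\tkh \subset \thu$ alone. Given $P \in (S(\thu)^\ast)^{W_{H_U}}$, Chevalley's theorem provides a unique $\widetilde P \in (S(\hu)^\ast)^{H_U}$ with $\widetilde P|_{\thu} = P$. Because $\tkh \subset \thu$ and $\tkh \subset \kh$, we get
\[
P|_{\tkh} = \widetilde P|_{\tkh} = \bigl(\widetilde P|_{\kh}\bigr)\big|_{\tkh},
\]
and $\widetilde P|_{\kh}$ is $K_H$-invariant, so $P|_{\tkh}$ is $W_{K_H}$-invariant. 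The very same computation shows $r_K\bigl(\rest(\widetilde P)\bigr) = \rest\bigl(r_H(\widetilde P)\bigr)$, both sides being equal to $\widetilde P|_{\tkh}$; in other words all four maps in the square factor through restriction to $\tkh$, so the square commutes. This uses nothing beyond the nesting $T_{G_U} \supset T_{H_U} \supset T_{K_H}$ built into the Convention.

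Since $r_H$ and $r_K$ are isomorphisms and the square commutes, $r_H$ carries $\ker\bigl(\rest : (S(\hu)^\ast)^{H_U} \to (S(\kh)^\ast)^{K_H}\bigr)$ isomorphically onto $\ker\bigl(\rest : (S(\thu)^\ast)^{W_{H_U}} \to (S(\tkh)^\ast)^{W_{K_H}}\bigr)$, and by the definition of $I_{G_U/H_U}$ (which rests on the same kind of compatibility, now for the pair $(G_U,H_U)$) it carries $J_{G_U/H_U}$ onto $I_{G_U/H_U}$. Hence the standing hypothesis $\ker(\rest) \not\subset I_{G_U/H_U}$ is equivalent, via $r_H$, to the hypothesis $\ker(\rest) \not\subset J_{G_U/H_U}$ of Proposition~\ref{prop:noninj}, and the conclusion follows from that proposition. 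I expect the only subtle point to be the commutativity of the square discussed above --- specifically, the observation that the Chevalley isomorphism is itself implemented by restriction to the torus, which is what forces the maps to be compatible; everything else is bookkeeping.
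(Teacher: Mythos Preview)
Your proof is correct and follows exactly the approach the paper intends: the paper gives no separate argument for Corollary~\ref{cor:noninj2}, merely stating that Proposition~\ref{prop:noninj} can be ``rewritten'' via Chevalley's restriction theorem and noting that $I_{G_U/H_U}$ is by definition the image of $J_{G_U/H_U}$ under the Chevalley isomorphism. You have simply unpacked this, in particular verifying the commutativity of the relevant square and the well-definedness of the torus-level restriction map, which the paper leaves implicit.
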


\section{Reduction to maximal tori}

In the statement of Theorem~\ref{thm:main}, one can replace 
$K_H$ by $T_{K_H}$: 

\begin{cor}\label{cor:main2}
If the homomorphism
\[
\pi^\ast: H^\bl(G_U/H_U; \C) \to H^\bl(G_U/T_{K_H}; \C)
\]
induced by the projection $\pi : G_U/T_{K_H} \to G_U/H_U$ is not injective, 
is not injective, then $G/H$ does not admit a compact Clifford--Klein form. 
\end{cor}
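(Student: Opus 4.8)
The plan is to deduce Corollary~\ref{cor:main2} from Theorem~\ref{thm:main} by factoring the projection $G_U/T_{K_H} \to G_U/H_U$ through $G_U/K_H$ and showing that the extra step $G_U/T_{K_H} \to G_U/K_H$ does not destroy injectivity. More precisely, since $T_{K_H} \subset K_H \subset H_U$, the projection $\pi: G_U/T_{K_H} \to G_U/H_U$ factors as
\[
G_U/T_{K_H} \xrightarrow{\ p_1\ } G_U/K_H \xrightarrow{\ p_2\ } G_U/H_U,
\]
so that $\pi^\ast = p_1^\ast \circ p_2^\ast$ on cohomology. By Theorem~\ref{thm:main} (more precisely, its contrapositive together with the hypothesis that $G/H$ \emph{does} admit a compact Clifford--Klein form, which we assume toward a contradiction), the map $p_2^\ast: H^\bl(G_U/H_U;\C) \to H^\bl(G_U/K_H;\C)$ is injective. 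So it suffices to show that $p_1^\ast: H^\bl(G_U/K_H;\C) \to H^\bl(G_U/T_{K_H};\C)$ is injective as well; then $\pi^\ast$ is a composite of injections, hence injective, contradicting the hypothesis of the corollary.

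First I would observe that $p_1: G_U/T_{K_H} \to G_U/K_H$ is, by Lemma~\ref{lem:ob}~(3) applied with the chain $G_U \supset K_H \supset T_{K_H}$, a fibre bundle with typical fibre $K_H/T_{K_H}$. Since $K_H$ is a compact connected Lie group and $T_{K_H}$ is a maximal torus of it, the flag manifold $K_H/T_{K_H}$ has cohomology concentrated in even degrees with $H^0(K_H/T_{K_H};\C) = \C$, and in particular its rational cohomology is nonzero in degree $0$. The key classical input is that for such a bundle the map on the base is injective in $\C$-cohomology: this can be seen either from the Leray--Serre spectral sequence (the fibre being totally nonhomologous to zero in $\C$-cohomology, as $H^\bl(K_H;\C) \to H^\bl(T_{K_H};\C)$ is surjective by a standard fact about compact Lie groups, or equivalently because the Euler characteristic of $K_H/T_{K_H}$ is $|W_{K_H}| \neq 0$ so the bundle has a cohomology transfer / Becker--Gottlieb argument), or simply from the fact that $p_1^\ast$ admits a one-sided inverse given by integration along the fibre composed with suitable normalization. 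Any of these yields $\ker p_1^\ast = 0$.

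Assembling the pieces: assume for contradiction that $G/H$ admits a compact Clifford--Klein form; then by Theorem~\ref{thm:main} $p_2^\ast$ is injective, by the fibre-bundle argument above $p_1^\ast$ is injective, hence $\pi^\ast = p_1^\ast \circ p_2^\ast$ is injective, contradicting the standing hypothesis of Corollary~\ref{cor:main2}. Therefore $G/H$ admits no compact Clifford--Klein form.

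I expect the only genuine point requiring care to be the injectivity of $p_1^\ast$, i.e.\ identifying which standard fact about the cohomology of the bundle $K_H/T_{K_H} \hookrightarrow G_U/T_{K_H} \to G_U/K_H$ to cite; everything else is formal. One clean way to phrase it, avoiding spectral-sequence bookkeeping, is: the composite $G_U/T_{K_H} \to G_U/K_H$ sits in the larger picture where $H^\bl(G_U/T_{K_H};\C)$ is a free module over $H^\bl(G_U/K_H;\C)$ with basis indexed by $W_{K_H}$ (a relative version of the Leray--Hirsch theorem, valid because $H^\bl(K_H/T_{K_H};\C)$ is generated by restrictions of global classes), and a free module of positive rank over a ring makes the structure map injective. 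Alternatively, one may simply invoke that $p_1$ has a section after passing to a finite cover, or that the transfer homomorphism $\tau: H^\bl(G_U/T_{K_H};\C) \to H^\bl(G_U/K_H;\C)$ satisfies $\tau \circ p_1^\ast = \chi(K_H/T_{K_H}) \cdot \mathrm{id} = |W_{K_H}| \cdot \mathrm{id}$, which is an isomorphism over $\C$. Either citation closes the argument.
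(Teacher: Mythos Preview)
Your proof is correct and follows essentially the same route as the paper: factor $\pi$ through $G_U/K_H$, invoke Theorem~\ref{thm:main} for the $p_2^\ast$ step, and establish injectivity of $p_1^\ast:H^\bl(G_U/K_H;\C)\to H^\bl(G_U/T_{K_H};\C)$ separately. The only difference is cosmetic: where you sketch several equivalent justifications (transfer, Leray--Hirsch, spectral sequence) for the injectivity of $p_1^\ast$, the paper simply quotes this as a classical fact (Fact~\ref{fact:torus}, due to Leray) applied with $M=G_U$ and $K=K_H$.
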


In order to prove Corollary~\ref{cor:main2}, 
we use the following fact: 

\begin{fact}\label{fact:torus} 
{\rm (see \cite[Th\'eor\`eme 2.2]{Ler}, 
\cite[Theorem 6.8.3]{GuiSte})}
Let $K$ be a connected compact Lie group, 
$T$ a maximal torus of $K$ and $W$ its Weyl group. 
If $M$ is a manifold on which $K$ acts freely, 
the homomorphism
\[
\pi^\ast : H^\bl(M/K; \C) \to H^\bl(M/T; \C)
\]
induced by the projection $\pi : M/T \to M/K$ 
is injective and its image is $H^\bl(M/T; \C)^W$.
\end{fact}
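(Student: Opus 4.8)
The plan is to reduce Corollary~\ref{cor:main2} to Theorem~\ref{thm:main} by factoring the projection $\pi$ through $G_U/K_H$ and exploiting the injectivity supplied by Fact~\ref{fact:torus}.

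First I would record that, since $T_{K_H} \subset K_H \subset H_U$, the projection $\pi : G_U/T_{K_H} \to G_U/H_U$ is the composite of the two natural projections $\pi_2 : G_U/T_{K_H} \to G_U/K_H$ and $\pi_1 : G_U/K_H \to G_U/H_U$, so that on cohomology $\pi^\ast = \pi_2^\ast \circ \pi_1^\ast$, where
\[
H^\bl(G_U/H_U; \C) \xrightarrow{\pi_1^\ast} H^\bl(G_U/K_H; \C) \xrightarrow{\pi_2^\ast} H^\bl(G_U/T_{K_H}; \C).
\]
Next I would apply Fact~\ref{fact:torus} with the compact connected group $K = K_H$ (connected by the remark following Convention~\ref{conv:red}), its maximal torus $T = T_{K_H}$, and the manifold $M = G_U$ on which $K_H$ acts freely by right multiplication (the action is free because $K_H$ is a closed subgroup of $G_U$). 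This yields that $\pi_2^\ast : H^\bl(G_U/K_H; \C) \to H^\bl(G_U/T_{K_H}; \C)$ is injective.

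The key logical step is then immediate: a composite of injective linear maps is injective, so if $\pi^\ast = \pi_2^\ast \circ \pi_1^\ast$ is not injective while $\pi_2^\ast$ is injective, then $\pi_1^\ast : H^\bl(G_U/H_U; \C) \to H^\bl(G_U/K_H; \C)$ cannot be injective. Thus the hypothesis of Corollary~\ref{cor:main2} forces the non-injectivity of exactly the map appearing in Theorem~\ref{thm:main}, and applying that theorem gives the desired conclusion that $G/H$ admits no compact Clifford--Klein form.

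As for the main obstacle: there is essentially no hard analytic or geometric content here, since both Fact~\ref{fact:torus} and Theorem~\ref{thm:main} are available as black boxes. The only points requiring care are the verification that the hypotheses of Fact~\ref{fact:torus} genuinely hold --- namely that $K_H$ is compact and \emph{connected}, so that its Weyl group is defined and the fact applies, and that its right action on $G_U$ is free --- both of which are guaranteed by Convention~\ref{conv:red} and the subsequent remark. I expect the whole argument to be short; its substance is entirely contained in the factorization of $\pi$ together with the injectivity of $\pi_2^\ast$.
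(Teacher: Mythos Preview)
Your proposal does not prove the stated target. The statement labeled Fact~\ref{fact:torus} is a cited classical result (Leray; Guillemin--Sternberg) for which the paper gives no proof at all, and your write-up makes no attempt to establish it either --- you invoke it as a black box. What you have actually written is a proof of Corollary~\ref{cor:main2}, the statement immediately preceding Fact~\ref{fact:torus} in the paper.

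If the intended target was indeed Corollary~\ref{cor:main2}, then your argument is correct and essentially identical to the paper's own proof: both factor the projection $G_U/T_{K_H} \to G_U/H_U$ through $G_U/K_H$, apply Fact~\ref{fact:torus} with $M = G_U$ and $K = K_H$ to obtain injectivity of $H^\bl(G_U/K_H;\C) \to H^\bl(G_U/T_{K_H};\C)$, deduce that non-injectivity of the composite forces non-injectivity of $H^\bl(G_U/H_U;\C) \to H^\bl(G_U/K_H;\C)$, and then invoke Theorem~\ref{thm:main}. Your version is slightly more explicit in checking the hypotheses of Fact~\ref{fact:torus} (connectedness of $K_H$, freeness of the right action), but the substance is the same.
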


\begin{proof}[Proof of Corollary~\ref{cor:main2}.]
Putting $M=G_U$ and $K=K_H$ in Fact~\ref{fact:torus}, we obtain that 
\[
\pi^\ast : H^\bl(G_U/K_H; \C) \to H^\bl(G_U/T_{K_H}; \C)
\]
is injective. Thus 
$\pi^\ast : H^\bl(G_U/H_U; \C) \to H^\bl(G_U/K_H; \C)$ is injective 
if and only if 
$\pi^\ast : H^\bl(G_U/H_U; \C) \to H^\bl(G_U/T_{K_H}; \C)$ 
is injective. Now the statement follows from Theorem~\ref{thm:main}.
\end{proof}

\section{Examples}

In this section, 
we prove Corollary~\ref{cor:list} and Corollary~\ref{cor:nonsym}. 

\begin{proof}[Proof of Corollary~\ref{cor:list}.] 
(1) It is enough to confirm that 
the assumption of Proposition~\ref{prop:noninj} is satisfied when 
$G_U = U(2n)$, $H_U = U(n) \times U(n)$ and 
\[
K_H = 
\left\{ 
\begin{pmatrix}A&0 \\ 0&\bar{A}\end{pmatrix} : A \in U(n)
\right\}.
\]
Recall that $(S (\fraku(n))^\ast)^{U(n)}$ is the polynomial algebra generated by $\{c_1, \dots, c_n \}$, 
where $c_i \in (S^i (\fraku(n))^\ast)^{U(n)}$ refers 
the elementary symmetric polynomial of $U(n)$ of degree $i$. 
Geometrically, $c_i$ corresponds to the $i$-th Chern class of $U(n)$. 
Now, $(S (\gu)^\ast)^{G_U}$, $(S (\hu)^\ast)^{H_U}$ and $(S (\kh)^\ast)^{K_H}$
are the polynomial algebras generated by $\{ c_1, \dots, c_{2n} \}$, 
$\{c_1\otimes 1, \dots, c_n\otimes 1, 1\otimes c_1, \dots, 1\otimes c_n\}$ 
and $\{ c_1, \dots, c_n \}$, respectively. The restriction maps are given by
\[
\rest : (S (\gu)^\ast)^{G_U} \to (S (\hu)^\ast)^{H_U}, \quad 
c_i \mapsto c_i \otimes 1 + c_{i-1} \otimes c_1 + \dots + 1 \otimes c_i
\]
and
\[
\rest : (S (\hu)^\ast)^{H_U} \to (S (\kh)^\ast)^{K_H}, \quad 
c_i \otimes 1 \mapsto c_i, \ 1 \otimes c_i \mapsto (-1)^i c_i.
\]
Therefore, 
\[
c_2 \otimes 1 - 1 \otimes c_2 \in 
\ker \left( \rest : (S (\hu)^\ast)^{H_U} 
\to (S (\kh)^\ast)^{K_H} \right)
\]
On the other hand,
\[
c_2 \otimes 1 - 1 \otimes c_2 \notin J_{G_U/H_U}, 
\]
namely, $c_2 \otimes 1 - 1 \otimes c_2$ is not contained in the ideal of $(S (\hu)^\ast)^{H_U}$ 
generated by the restrictions of the positive-degree parts of the elements of $(S (\gu)^\ast)^{G_U}$. 
Thus the assumption of Proposition~\ref{prop:noninj} is satisfied. 

(2) We first remark that 
we may replace $SO(p,q)$ with its identity component $SO_o(p,q)$ 
by Fact~\ref{fact:connected}. 
Thus it suffices to confirm that 
the assumption of Proposition~\ref{prop:noninj} is satisfied when 
$G_U = SU(p+q)$, $H_U = SO(p+q)$ and $K_H = SO(p) \times SO(q)$.
Recall that $(S (\so(n))^\ast)^{SO(n)}$ 
is the polynomial algebra generated by $\{ p_1, \dots, p_{\frac{n-1}{2}} \}$ when $n$ is odd, 
and by $\{ p_1, \dots, p_{\frac{n-2}{2}}, e \}$ when $n$ is even (note that $p_{\frac{n}{2}} = e^2$). 
Here $p_i \in (S^{2i} (\so(n))^\ast)^{SO(n)}$ corresponds to the $i$-th Pontrjagin class 
and $e \in (S^{\frac{n}{2}} (\so(n))^\ast)^{SO(n)}$ corresponds to the Euler class. 
Since $p+q$ is even, 
\[
(S (\hu)^\ast)^{H_U} = (S (\so(p+q))^\ast)^{SO(p+q)} 
\]
is freely generated by $\{ p_1, \dots, p_{\frac{p+q-2}{2}}, e \}$. 
Now, since $p$ and $q$ are odd, 
\[
e \in \ker \left( \rest : (S (\hu)^\ast)^{H_U} 
\to (S (\kh)^\ast)^{K_H} \right), 
\] 
namely, the restriction of the Euler class $e$ to $\so(p) \oplus \so(q)$ is equal to zero. 
On the other hand, since the restrictions of the elements of $(S (\gu)^\ast)^{G_U}$ are written as 
polynomials of Pontrjagin classes, $e \notin J_{G_U/H_U}$. 

(3) (4) The proofs are analogous to (1) and (2); we consider 
$p_1 \otimes 1 - 1 \otimes p_1$ and $e \otimes 1$, respectively.
\end{proof}

Next, we shall prove Corollary~\ref{cor:nonsym}. 
We use the following general results:

\begin{prop}\label{prop:enlarge}
\begin{enumerate}[(1)]
\item Let $\widetilde{G}$ be a linear reductive Lie group, 
$G$ a closed subgroup of $\widetilde{G}$ 
and $H$ a closed connected subgroup of $G$. 
Assume that $G$ is reductive in $\widetilde{G}$ and $H$ is reductive in $G$. 
If $(G,H)$ satisfies the assumption of Proposition~\ref{prop:noninj} 
(or equivalently, Corollary~\ref{cor:noninj2}), 
so does $(\widetilde{G}, H)$. 
\item Let $G$ be a linear reductive Lie group. Let $H, H'$ be two 
closed connected subgroups of $G$ such that $H \cap H' = \{ 1 \}$ and $H' \subset Z(H)$. 
Assume that $H \times H'$ is reductive in $G$. 
If $(G,H)$ satisfies the assumption of Proposition~\ref{prop:noninj}, 
so does $(G, H \times H')$. 
\end{enumerate}
\end{prop}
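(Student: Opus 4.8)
The plan is to reduce everything to the behaviour of the Chern--Weil / Chevalley picture under restriction, and to exploit the fact that the assumption of Proposition~\ref{prop:noninj} is a statement purely about the algebra homomorphisms $\rest$ between invariant polynomial rings. For part (1), the key observation is that $K_H$ does not change when we enlarge $G$ to $\widetilde G$, so the target $(S(\kh)^\ast)^{K_H}$ and the map $\rest:(S(\hu)^\ast)^{H_U}\to(S(\kh)^\ast)^{K_H}$ are literally the same for $(G,H)$ and for $(\widetilde G,H)$; in particular the kernel of this restriction map is unchanged. What can change is the ideal $J$: passing from $J_{G_U/H_U}$ to $J_{\widetilde G_U/H_U}$. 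So I would first check that $J_{\widetilde G_U/H_U}\subset J_{G_U/H_U}$, i.e.\ that enlarging $G$ can only \emph{shrink} the ideal generated by restricted $G$-invariants. This follows because the restriction $(S(\widetilde\g_U)^\ast)^{\widetilde G_U}\to(S(\gu)^\ast)^{G_U}$ composed with $\rest:(S(\gu)^\ast)^{G_U}\to(S(\hu)^\ast)^{H_U}$ equals the restriction $(S(\widetilde\g_U)^\ast)^{\widetilde G_U}\to(S(\hu)^\ast)^{H_U}$ (transitivity of restriction), so the generators of $J_{\widetilde G_U/H_U}$ form a subset of $J_{G_U/H_U}$ up to the ideal generated by $J_{G_U/H_U}$ — more precisely every generator of $J_{\widetilde G_U/H_U}$ lies in $J_{G_U/H_U}$. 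Once $J_{\widetilde G_U/H_U}\subset J_{G_U/H_U}$ is established, the hypothesis $\ker(\rest)\not\subset J_{G_U/H_U}$ immediately yields $\ker(\rest)\not\subset J_{\widetilde G_U/H_U}$, which is the assumption of Proposition~\ref{prop:noninj} for $(\widetilde G,H)$. I should also note that $G_U$, $H_U$, $\widetilde G_U$ can be arranged compatibly (complexify, intersect with a unitary group) so that the inclusions $H_U\subset G_U\subset\widetilde G_U$ hold; this is routine from Convention~\ref{conv:red}.

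For part (2), write $\widehat H=H\times H'$. Since $H'\subset Z(H)$ and $H\cap H'=\{1\}$, the group $\widehat H$ is a direct product and its maximal compact subgroup is $K_H\times K_{H'}$, a torus times $K_H$ in the relevant cases; more to the point, at the level of compact forms $\widehat H_U=H_U\times H'_U$ and $(S(\widehat\h_U)^\ast)^{\widehat H_U}\cong(S(\hu)^\ast)^{H_U}\otimes(S(\h'_U)^\ast)^{H'_U}$. The restriction map to the maximal compact splits as a tensor product of restriction maps, so its kernel contains $\ker\big(\rest:(S(\hu)^\ast)^{H_U}\to(S(\kh)^\ast)^{K_H}\big)\otimes 1$. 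Thus if $P\in(S(\hu)^\ast)^{H_U}$ witnesses the hypothesis for $(G,H)$ — that is, $P\in\ker(\rest)$ but $P\notin J_{G_U/H_U}$ — then $P\otimes 1$ lies in the kernel of the restriction map for $\widehat H$, and what remains is to check $P\otimes 1\notin J_{G_U/\widehat H_U}$. Here $J_{G_U/\widehat H_U}$ is generated by $\rest_{\widehat H}$ of positive-degree $G_U$-invariants, and since the composite $(S(\gu)^\ast)^{G_U}\to(S(\widehat\h_U)^\ast)^{\widehat H_U}$ followed by the projection $(S(\widehat\h_U)^\ast)^{\widehat H_U}\to(S(\hu)^\ast)^{H_U}$ (setting the $\h'_U$-variables to zero, or rather the augmentation on the second tensor factor) equals $\rest:(S(\gu)^\ast)^{G_U}\to(S(\hu)^\ast)^{H_U}$, this projection sends $J_{G_U/\widehat H_U}$ into $J_{G_U/H_U}$. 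Since it sends $P\otimes 1$ to $P$, and $P\notin J_{G_U/H_U}$, we conclude $P\otimes 1\notin J_{G_U/\widehat H_U}$, giving the assumption of Proposition~\ref{prop:noninj} for $(G,\widehat H)$.

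The routine-but-necessary bookkeeping I expect to be the main obstacle is making the compact-form constructions functorial enough that all these restriction diagrams genuinely commute: one must choose the complexifications and unitary intersections compatibly (for nested groups in (1), for a direct product in (2)), and verify that the algebra isomorphism $(S(\hu)^\ast)^{H_U}\cong(S(\thu)^\ast)^{W_{H_U}}$ and its analogues intertwine all the restriction maps in sight. None of this is deep, but it is where the argument could slip if the identifications are not set up carefully. Once the diagrams commute, both parts are the short ideal-theoretic arguments sketched above; the crux in each case is the single inclusion of ideals ($J_{\widetilde G_U/H_U}\subset J_{G_U/H_U}$ in (1), and the image of $J_{G_U/\widehat H_U}$ lying in $J_{G_U/H_U}$ under the augmentation in (2)), together with the corresponding trivial remark that the kernel of $\rest$ to the maximal compact only grows.
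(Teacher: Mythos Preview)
Your proposal is correct and follows essentially the same route as the paper. For (1) you use exactly the paper's argument: the restriction $(S(\tilde\g_U)^\ast)^{\widetilde G_U}\to(S(\hu)^\ast)^{H_U}$ factors through $(S(\gu)^\ast)^{G_U}$, hence $J_{\widetilde G_U/H_U}\subset J_{G_U/H_U}$, while the map to $(S(\kh)^\ast)^{K_H}$ is unchanged. For (2) your argument via the augmentation on the second tensor factor is precisely the content of the paper's remark that the restriction $(S(\gu)^\ast)^{G_U}\to(S(\hu)^\ast)^{H_U}$ factors through $(S(\hu)^\ast)^{H_U}\otimes(S(\h'_U)^\ast)^{H'_U}$; you have simply spelled out what the paper calls ``straightforward''.
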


\begin{proof}
(1) Without loss of generality, we may assume that 
$\widetilde{G}$, $G$ and $H$ are stable under transposition. 
Thus we can define $\widetilde{G}_U$ and $\tilde{\g}_U$ 
as in Convention~\ref{conv:red}. 
Since the restriction map
$(S(\tilde{\g}_U)^\ast)^{\widetilde{G}_U} \to (S(\hu)^\ast)^{H_U}$ 
factors $(S (\gu)^\ast)^{G_U}$, the image of 
\[
\rest : (S^p(\tilde{\g}_U)^\ast)^{\widetilde{G}_U} 
\to (S^p (\hu)^\ast)^{H_U}
\]
is contained in the image of 
\[
\rest : (S^p (\gu)^\ast)^{G_U} 
\to (S^p (\hu)^\ast)^{H_U}
\]
for each $p$. Hence $J_{\widetilde{G}_U/H_U} \subset J_{G_U/H_U}$ and
the statement follows. 

(2) Without loss of generality, 
we may assume that $G$, $H$ and $H'$ are stable under transposition. 
Thus we can define $H'_U$, $\h'_U$, $K_{H'}$ and $\frakk_{\h'}$ 
as in Convention~\ref{conv:red}. 
We remark that 
\[
(S(\hu \oplus \h'_U)^\ast)^{H_U \times H'_U} 
\simeq (S(\hu)^\ast)^{H_U} \otimes (S(\h'_U)^\ast)^{H'_U}.
\]
By the assumption of Proposition~\ref{prop:noninj} for $(G,H)$, 
there exists $P \in (S (\hu)^\ast)^{H_U}$ such that 
$P \notin J_{G_U/H_U}$ and $P |_{\kh} = 0$. Then 
$P \otimes 1 \in (S(\hu)^\ast)^{H_U} \otimes (S(\h'_U)^\ast)^{H'_U}$ 
satisfies $P |_{\kh \oplus \frakk_{\h'}} = 0$. 
Furthermore, $P \otimes 1 \notin J_{G_U/(H_U \times H_U')}$; 
it is a straightforward consequence of the fact that 
the restriction map $(S (\gu)^\ast)^{G_U} \to (S(\hu)^\ast)^{H_U}$
factors $(S(\hu)^\ast)^{H_U} \otimes (S(\h'_U)^\ast)^{H'_U}$. 
Thus $(G, H \times H')$ also satisfies 
the assumption of Proposition~\ref{prop:noninj}. 
\end{proof}

\begin{proof}[Proof of Corollary~\ref{cor:nonsym}.]
(1) Suppose $n_1, n_2 > 2$. 
\[
(SL(n_1+n_2,\R), SL(n_1,\R) \times SL(n_2,\R))
\]
satisfies the assumption of Proposition~\ref{prop:noninj}. Indeed, 
\[
c_3 \otimes 1 \in 
\ker \left( \rest : (S (\hu)^\ast)^{H_U} 
\to (S (\kh)^\ast)^{K_H} \right), 
\] 
while $c_3 \otimes 1 \notin J_{G_U/H_U}$. 
Here, $c_i \in (S^i(\su(p))^\ast)^{SU(p)}$ refers 
the $i$-th Chern class of $SU(p)$. Then 
\[
(SL(n_1 + \dots + n_k, \R), SL(n_1,\R) \times SL(n_2,\R))
\]
also satisfies the assumption by Proposition~\ref{prop:enlarge} (1) 
and thus so does 
\[
(SL(n_1 + \dots + n_k, \R), SL(n_1,\R) \times \dots \times SL(n_k,\R))
\]
by Proposition~\ref{prop:enlarge} (2). 
In particular, 
$SL(n_1 + \dots + n_k, \R) / 
(SL(n_1,\R) \times \dots \times SL(n_k,\R))$ 
does not admit a Clifford--Klein form. 

The proofs of (2)--(4) are parallel to that of (1). 
\end{proof}

\begin{rem}
More generally, if $n_1, n_2 > 2$, then
\begin{itemize}
\item $SL(n_1+n_2+n_3,\R)/(SL(n_1,\R)\times SL(n_2,\R) \times H')$ and
\item $SL(n_1+n_2+n_3,\R)/(S(GL(n_1,\R)\times GL(n_2,\R)) \times H')$
\end{itemize}
do not admit compact Clifford--Klein forms 
for any closed subgroup $H'$ of $SL(n_3,\R)$ 
such that $H'$ is reductive in $SL(n_3, \R)$ and the complexification of $H'$ is closed in $SL(n_3,\C)$. 
The proof is the same as that of Corollary~\ref{cor:nonsym} (1). 
The similar results also hold for (2)--(4).
\end{rem}

\section{Some remarks}

We give a proof of \cite[Proposition 4.10]{Kob89} 
in the spirit of \cite{KO} rather than \cite{Kob89} 
which uses an argument of spectral sequence: 

\begin{cor}\label{cor:KO}
If $\rank G = \rank H$ and $\rank K_G > \rank K_H$, 
$G/H$ does not admit a compact Clifford--Klein form.
\end{cor}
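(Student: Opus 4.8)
The plan is to verify the hypothesis of Corollary~\ref{cor:noninj2} (equivalently Proposition~\ref{prop:noninj}), namely that
\[
\ker\left( \rest : (S(\thu)^\ast)^{W_{H_U}} \to (S(\tkh)^\ast)^{W_{K_H}} \right) \not\subset I_{G_U/H_U}.
\]
The numerical input is $\rank G = \rank H$ and $\rank K_G > \rank K_H$. First I would translate these rank conditions into statements about the maximal tori fixed in the excerpt. Since $\rank G = \rank H$, we have $\dim \tgu = \dim \thu$; since $G$ and $H$ have the same complexified rank, $T_{G_U}$ and $T_{H_U}$ can be chosen with $\tgu = \thu$, so the restriction map $(S(\tgu)^\ast)^{W_{G_U}} \to (S(\thu)^\ast)^{W_{H_U}}$ is just the inclusion of $W_{G_U}$-invariants into $W_{H_U}$-invariants inside the common polynomial ring $S(\thu)^\ast$. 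Meanwhile $\rank K_G > \rank K_H$ means $\dim \tkg > \dim \tkh$. The key dimension count: the Cartan decomposition gives $\tgu = \tkg \oplus (\tgu \cap i\mathfrak{p}_G)$ — more precisely, a compact Cartan subalgebra of the complexification decomposes, with respect to the Cartan involution, into its maximally compact and maximally noncompact parts — and similarly for $\thu$. Since $\tgu = \thu$ but $\dim\tkg > \dim\tkh$, the ``noncompact part'' of the Cartan subalgebra shrinks when passing from $G$ to $H$.

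The heart of the argument is a Poincaré-series / dimension comparison. By Fact~\ref{fact:torus} applied to the free $K_H$-action on $G_U$, we have $H^\bl(G_U/K_H;\C) \cong H^\bl(G_U/T_{K_H};\C)^{W_{K_H}}$, and one knows $H^\bl(G_U/T_{K_H};\C)$ is a free module over $H^\bl(BT_{K_H};\C) = S(\tkh^\ast)$ of the appropriate rank; in particular its Poincaré series, hence that of $H^\bl(G_U/K_H;\C)$, can be written down. The analogous computation for $H^\bl(G_U/H_U;\C)$ uses that $\pi^\ast$ from the Chern--Weil picture has image described by Fact~\ref{fact:kerw}, so $H^\bl(G_U/H_U;\C)$ is (at least additively) governed by $(S(\thu^\ast)^{W_{H_U}})/I_{G_U/H_U}$. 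I would compute both Poincaré series explicitly in terms of the exponents/degrees of the invariants of $W_{G_U}$, $W_{H_U}$, $W_{K_H}$ and show that when $\rank G = \rank H$ and $\rank K_G > \rank K_H$ the total dimension of $H^\bl(G_U/H_U;\C)$ strictly exceeds that of $H^\bl(G_U/K_H;\C)$. Since $\pi^\ast$ cannot be injective if the source is a larger finite-dimensional vector space than the target, Theorem~\ref{thm:main} then gives the conclusion.

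Concretely, $G_U/H_U$ is an equal-rank compact homogeneous space, so its Euler characteristic is $|W_{G_U}|/|W_{H_U}| > 0$ and its odd cohomology vanishes; one gets $\dim H^\bl(G_U/H_U;\C) = \chi(G_U/H_U) = |W_{G_U}|/|W_{H_U}|$. On the other hand, $G_U/K_H$ fibers over $G_U/H_U$ with fiber $H_U/K_H$, and $H_U/K_H$ has \emph{vanishing} Euler characteristic precisely because $\rank K_H < \rank H_U$ (equal rank fails on the fiber). Hence $\chi(G_U/K_H) = \chi(G_U/H_U)\cdot\chi(H_U/K_H) = 0$, so in particular $\dim H^\bl(G_U/K_H;\C) $ is either zero in the naive Euler-characteristic sense — which it is not — so I should instead argue directly that an equal-rank target would be forced if $\pi^\ast$ were injective. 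The clean route: if $\pi^\ast$ were injective, then the nonzero class in $H^{\mathrm{top}}(G_U/H_U;\C)$ (top degree, nonzero since $G_U/H_U$ is a closed orientable manifold) would have nonzero image, but by the fibration $\pi$ with fiber $H_U/K_H$ satisfying $\chi(H_U/K_H)=0$ one shows the transgression/Gysin-type argument kills it — more honestly, I would invoke that $\pi^\ast$ on top cohomology is ``integration-compatible'' and that $\dim(G_U/K_H) = \dim(G_U/H_U) + \dim(H_U/K_H)$ with $\dim(H_U/K_H)>0$, so the image of a top class of $G_U/H_U$ lies in a non-top degree of $G_U/K_H$ and must be a product of Chern--Weil classes whose vanishing follows from $\chi(H_U/K_H)=0$.

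The main obstacle will be making the last step rigorous: naively comparing dimensions does not immediately work because $H^\bl(G_U/K_H;\C)$ is large (it surjects, via $\pi^\ast$ of Fact~\ref{fact:torus}, onto a ring with the same Poincaré series issues). The right formulation is almost certainly the Chern--Weil one of Proposition~\ref{prop:noninj}: exhibit an explicit $P \in (S(\thu)^\ast)^{W_{H_U}}$ with $P|_{\tkh} = 0$ but $P \notin I_{G_U/H_U}$. Because $\rank K_G > \rank K_H$, there is a ``noncompact direction'' $\xi$ in $\thu = \tgu$ that is $W_{G_U}$-related to other directions but becomes central/vanishing upon restriction to $\tkh$; the polynomial that remembers this — something like a product over a $W_{G_U}$-orbit of linear forms, or a suitable Pontrjagin/Euler-type class — should restrict to zero on $\tkh$ (since $\tkh$ misses the relevant direction, e.g.\ an Euler class of a bundle whose rank exceeds $\dim\tkh$ in the compact-part direction) yet be nonzero modulo $I_{G_U/H_U}$ by the equal-rank fact $\dim (S(\thu^\ast)^{W_{H_U}}/I_{G_U/H_U}) = |W_{G_U}|/|W_{H_U}| < \infty$ together with a degree count showing $P$ sits below the socle. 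Pinning down this $P$ in the stated generality — $G, H$ arbitrary with the given rank constraints — rather than case by case, is where the real work lies; I expect one wants the top-degree generator of the Weyl-invariants of $K_G$ that does not descend from $K_H$, transported to $\thu$ via $\tgu=\thu$, as the witness.
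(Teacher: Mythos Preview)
There is a genuine gap: you never produce a concrete witness for the non-injectivity of $\pi^\ast$. You cycle through a Poincar\'e-series comparison, an Euler-characteristic comparison, a vague top-degree argument, and finally the Chern--Weil framework of Proposition~\ref{prop:noninj}, but each is left incomplete, and you explicitly concede that ``pinning down this $P$ in the stated generality \dots\ is where the real work lies.'' In particular, the observation that $\chi(G_U/K_H)=0$ while $\chi(G_U/H_U)>0$ does not by itself preclude an injection on cohomology, since $H^\bl(G_U/K_H;\C)$ can have large total dimension despite its vanishing Euler characteristic; so the dimension-count route, as you yourself note, does not close.

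The missing witness is simply the Euler class $e(\T(G_U/H_U))$ of the tangent bundle. It is nonzero because $\rank G = \rank H$ forces $\chi(G_U/H_U)\neq 0$, and by Gauss--Bonnet--Chern this Euler characteristic is the integral of $e(\T(G_U/H_U))$. For the vanishing, the paper works via Corollary~\ref{cor:main2} (pulling back to $G_U/T_{K_H}$, not just to $G_U/K_H$): one has $\pi^\ast \T(G_U/H_U) \cong G_U \times_{T_{K_H}} (\gu/\hu)$, and as a $T_{K_H}$-module $\gu/\hu$ contains the summand $\tkg/\tkh$, on which the torus $T_{K_H}$ acts trivially. Since $\rank K_G > \rank K_H$ this summand is positive-dimensional, so $\pi^\ast \T(G_U/H_U)$ has a nonzero trivial subbundle and hence $\pi^\ast e(\T(G_U/H_U)) = e(\pi^\ast \T(G_U/H_U)) = 0$. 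Your intuition about an ``Euler-type class'' and a ``direction missed by $\tkh$'' was pointing exactly here; the trivial-summand argument is what makes it rigorous, and it is cleaner carried out geometrically via Corollary~\ref{cor:main2} than by hunting for an abstract invariant polynomial $P$ in the setting of Proposition~\ref{prop:noninj}.
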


\begin{proof}
It is well-known that the Euler characteristic $\chi(G_U/H_U)$ of 
$G_U/H_U$ is non-zero if and only if 
$\rank G_U = \rank H_U$, which is equivalent to $\rank G = \rank H$. 
By the Gauss--Bonnet--Chern theorem, 
\[
\chi(G_U/H_U) = \int_{G_U/H_U} e(\T(G_U/H_U)), 
\]
where $\T(G_U/H_U)$ denotes the tangent bundle of $G_U/H_U$. 
Hence the Euler class 
$e(\T(G_U/H_U)) \in H^n(G_U/H_U; \C) \ (n = \dim G - \dim H)$ 
is non-zero when $\rank G = \rank H$. 
Thus, by Corollary~\ref{cor:main2}, it suffices to show that 
$\pi^\ast : H^\bl(G_U/H_U; \C) \to H^\bl(G_U/T_{K_H}; \C)$ 
sends $e(\T(G_U/H_U))$ to zero if $\rank K_G > \rank K_H$. 

First, we note that 
\[
\T(G_U/H_U) = G_U \cross_{H_U} (\gu / \hu)
\]
and hence 
\[
\pi^\ast \T(G_U/H_U) = G_U \cross_{T_{K_H}} (\gu / \hu).
\]
Now, 
$\gu/\hu = (\tkg/\tkh) \oplus (\tkg / \tkh)^\perp$ as a real unitary 
representation of $T_{K_H}$ (an inner product on $\gu/\hu$ is 
defined by the Killing form of $\gu$). 
Therefore
\[
\pi^\ast \T(G_U/H_U) = 
\left( G_U \cross_{T_{K_H}} (\tkg / \tkh) \right) \oplus
\left( G_U \cross_{T_{K_H}} (\tkg / \tkh)^\perp \right).
\]
$G_U \cross_{T_{K_H}} (\tkg / \tkh)$ 
is a trivial bundle because $T_{K_H}$ acts trivially on 
$\tkg / \tkh$. Its typical fibre $\tkg / \tkh$ 
has non-zero dimension since $\rank K_G > \rank K_H$. 
As a consequence, 
$e \left( G_U \cross_{T_{K_H}} (\tkg / \tkh) \right) = 0$ and 
\begin{align*}
\pi^\ast e(\T(G_U/H_U))
&= e(\pi^\ast \T(G_U/H_U)) \\
&= e\left( G_U \cross_{T_{K_H}} (\tkg / \tkh) \right)
e\left( G_U \cross_{T_{K_H}} (\tkg / \tkh)^\perp \right) \\
&= 0.
\end{align*}
\end{proof}

The following results exhibit limitations of our method: 

\begin{prop}
$H^\bl(G_U/H_U; \C) \to H^\bl(G_U/K_H; \C)$ is injective 
if either (1) or (2) is satisfied: 
\begin{enumerate}[(1)]
\item $\rank H = \rank K_H$. 
\item $G$ is a complexification of $H$.
\end{enumerate}
\end{prop}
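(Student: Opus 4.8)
The plan is to show in each case that $\pi^\ast : H^\bl(G_U/H_U;\C) \to H^\bl(G_U/K_H;\C)$ is injective, which by Proposition~\ref{prop:isom} and the construction of $\eta$ means our obstruction gives no information. The natural tool is the same fibre-bundle picture as in Lemma~\ref{lem:ob} (3): $\pi : G_U/K_H \to G_U/H_U$ is a fibre bundle with fibre $H_U/K_H$, so the Leray--Hirsch theorem will give injectivity of $\pi^\ast$ as soon as the restriction map $H^\bl(G_U/K_H;\C) \to H^\bl(H_U/K_H;\C)$ (restriction to a fibre) is surjective.

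For (1), assume $\rank H = \rank K_H$, i.e.\ $T_{K_H}$ is a maximal torus of $H_U$ as well. Then $H_U/K_H$ is a compact homogeneous space with $\chi(H_U/K_H) \neq 0$, and in fact its rational cohomology is generated by the images of the Chern--Weil classes of the $K_H$-bundle $H_U \to H_U/K_H$; equivalently, by Fact~\ref{fact:kerw} applied to $H_U/K_H$, the map $w : (S(\kh)^\ast)^{K_H} \to H^\bl(H_U/K_H;\C)$ is surjective (since $H_U$ and $K_H$ have equal rank, one is in the situation where $H_U/K_H$ is formal with surjective Chern--Weil map — this is the classical computation for $G/T$-type spaces). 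But these Chern--Weil classes all extend over the total space: the $K_H$-bundle $G_U \to G_U/K_H$ restricts on each fibre to $H_U \to H_U/K_H$, so the characteristic classes $w(Q) \in H^\bl(G_U/K_H;\C)$, $Q \in (S(\kh)^\ast)^{K_H}$, restrict to the generators of $H^\bl(H_U/K_H;\C)$. This gives the Leray--Hirsch hypothesis, hence $\pi^\ast$ is injective. The main obstacle is making precise the claim that the Chern--Weil map to $H^\bl(H_U/K_H;\C)$ is surjective under the equal-rank hypothesis; I would either cite the classical structure of $H^\bl(H_U/K_H)$ via the Weyl group (Leray, or Greub--Halperin--Vanstone) or give it directly from Fact~\ref{fact:kerw} noting that in the equal-rank case $J_{H_U/K_H}$ has the complementary codimension so $w$ is onto.

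For (2), suppose $G_\C = H_\C$, so that $\gc = \hc$ and hence $(S(\gu)^\ast)^{G_U} \to (S(\hu)^\ast)^{H_U}$ is an isomorphism. Then by Fact~\ref{fact:kerw} the ideal $J_{G_U/H_U}$ equals $\ker\bigl(w : (S(\hu)^\ast)^{H_U} \to H^\bl(G_U/H_U;\C)\bigr)$ but generated by \emph{all} of the positive-degree part of $(S(\hu)^\ast)^{H_U}$; consequently $w$ is zero in positive degrees and $H^\bl(G_U/H_U;\C)$ is concentrated in degree $0$. Indeed, when $G$ is a complexification of $H$, the compact space $G_U/H_U$ is a point (if $G_U$ is connected, $G_U = H_U$ already, since a compact real form of $G_\C = H_\C$ is unique up to conjugacy and contained in a conjugate of $H_U$; more carefully, $H_U$ is a maximal compact subgroup of $G_\C = H_\C$ and $G_U$ is also one, so they are conjugate and $G_U/H_U$ is a point). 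Then $\pi^\ast$ is trivially injective because its source is just $\C$ in degree $0$. The only subtlety here is the normalization of Convention~\ref{conv:red}: one should check that under "$G$ is a complexification of $H$" the associated $G_U$ and $H_U$ are conjugate in $GL(N,\C)$, which follows from uniqueness of maximal compact subgroups. This case is essentially bookkeeping.

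I expect case (1) to be the real content: the key point is surjectivity of the Chern--Weil homomorphism for the equal-rank pair $(H_U, K_H)$ combined with the fact that these classes are restrictions of global classes on $G_U/K_H$, which is exactly the Leray--Hirsch input. Case (2) reduces to observing that the homogeneous space $G_U/H_U$ degenerates to a point.
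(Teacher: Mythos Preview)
Your approach to (1) via Leray--Hirsch is correct: under the equal-rank hypothesis the Chern--Weil map onto $H^\bl(H_U/K_H;\C)$ is surjective (Borel, Cartan), and those classes visibly extend to $G_U/K_H$, so Leray--Hirsch gives injectivity of $\pi^\ast$. The paper takes a shorter route already available from its own toolkit: $\rank H = \rank K_H$ means $T_{K_H}$ is a maximal torus of $H_U$, so Fact~\ref{fact:torus} with $M = G_U$, $K = H_U$ gives directly that $H^\bl(G_U/H_U;\C) \to H^\bl(G_U/T_{H_U};\C) = H^\bl(G_U/T_{K_H};\C)$ is injective, and this factors through $H^\bl(G_U/K_H;\C)$. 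Both proofs rest on the same structure theory, but the paper avoids invoking Leray--Hirsch and the surjectivity statement you flagged as the ``main obstacle''.

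Your argument for (2) has a genuine error. ``$G$ is a complexification of $H$'' means $G = H_\C$ as a \emph{real} Lie group; it does \emph{not} mean $G_\C = H_\C$. In Convention~\ref{conv:red} one then has $\g_\C = (\h_\C)\otimes_\R \C \cong \h_\C \oplus \h_\C$, so $G_U \cong H_U \times H_U$ with $H_U$ sitting diagonally. Hence $G_U/H_U \cong H_U$ is the compact group manifold, not a point, and its cohomology is the full exterior algebra on the primitive generators --- certainly not concentrated in degree~$0$. Your conclusion that $\pi^\ast$ is ``trivially injective'' therefore fails, and the case needs a real argument. The paper identifies $\pi$ with $(H_U \times H_U)/\Delta K_H \to (H_U \times H_U)/\Delta H_U$ and then notes that the reductive pair $(H\times H, \Delta H)$ always admits a compact Clifford--Klein form (a group manifold does); the contrapositive of Theorem~\ref{thm:main} then forces $\pi^\ast$ to be injective.
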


\begin{proof}
(1) By the proof of Corollary~\ref{cor:main2}, 
it suffices to show that 
$\pi^\ast: H^\bl(G_U/H_U; \C) \to H^\bl(G_U/T_{K_H}; \C)$
is injective. By assumption, $T_{K_H}$ coincides with $T_{H_U}$. 
Hence the injectivity follows from Fact~\ref{fact:torus} 
by putting $M=G_U$ and $K=H_U$. 

(2) If $G$ = $H_\C$, the projection $\pi : G_U/K_H \to G_U/H_U$ is 
rewritten as 
\[
\pi : (H_U \times H_U)/\Delta K_H \to (H_U \times H_U)/\Delta H_U. 
\]
$\pi^\ast : H^\bl((H_U \times H_U)/\Delta H_U; \C) \to 
H^\bl((H_U \times H_U)/\Delta K_H; \C)$ is injective 
because a group manifold $(H \times H) / \Delta H$ 
admits a compact Clifford--Klein form 
(see Kobayashi \cite[Example 4.8]{Kob89}). 
\end{proof}

\begin{ex}
\begin{enumerate}[(1)]
\item Suppose $(G,H) = (U(p+r,q), U(p,q) \times U(r))$ 
with $p \geqslant q$ and $p,q,r>0$. 
Then $\rank H = \rank K_H$, 
but only finite subgroups of $G$ act properly discontinuously on $G/H$ 
(in particular, $G/H$ does not admit a compact Clifford--Klein form) 
by the Calabi--Markus phenomenon. See Kobayashi \cite{Kob89}. 
\item Suppose $(G,H) = (SL(n,\C), SL(n,\R))$ with $n>1$. 
Then $G$ is a complexification of $H$, 
but only finite subgroups of $G$ act properly discontinuously on $G/H$ 
by the Calabi--Markus phenomenon. 
\end{enumerate}
\end{ex}

\begin{acknow}
The author expresses his sincere gratitude to 
Professor Toshiyuki Kobayashi for his advice and encouragement. 
This work was supported by 
the Program for Leading Graduate Schools, MEXT, Japan.
\end{acknow}

\noindent
\textsc{Graduate School of Mathematical Science, \\ The University of Tokyo, \\
3-8-1 Komaba, Meguro-ku, \\ Tokyo 153-8914, Japan} \\
\textit{E-mail address}: \texttt{ymorita@ms.u-tokyo.ac.jp}

\end{document}